  \theoremstyle{plain}
    \newtheorem{thm}{Theorem}[section]
    \newtheorem{prop}[thm]{Proposition}
   \newtheorem{lemma}[thm]{Lemma}
    \newtheorem{corollary}[thm]{Corollary}
    \newtheorem{subsec}[thm]{}
\theoremstyle{definition}
    \newtheorem{defn}[thm]{Definition}
        \newtheorem{remark}[thm]{Remark}
    \newtheorem{exam}[thm]{Example}
\theoremstyle{remark}
\title{}
\author{}
\date{}
\begin{document}
\title{On Equivariant dendriform algebras}
\author{Apurba Das}
\address{Department of Mathematics and Statistics,
Indian Institute of Technology, Kanpur 208016, Uttar Pradesh, India.}
\email{apurbadas348@gmail.com}

\author{Ripan Saha}
\address{Department of Mathematics,
Raiganj University, Raiganj, 733134,
West Bengal, India.}
\email{ripanjumaths@gmail.com}

\subjclass[2010]{17A30; 16E40; 18G55; 55N91.}
\keywords{Dendriform algebras; Group actions; Equivariant cohomology; Formal deformations}

\begin{abstract}
Dendriform algebras are certain associative algebras whose product splits into two binary operations and the associativity splits into three new identities. In this paper,
we study finite group actions on dendriform algebras. We define equivariant cohomology for dendriform algebras equipped with finite group actions similar to the Bredon cohomology for topological $G$-spaces. We show that equivariant cohomology of such dendriform algebras controls equivariant one-parameter formal deformations.
\end{abstract}
\maketitle
%\tableofcontents
\section{Introduction}
Dendriform algebras were first introduced by Jean-Louis Loday as a Koszul dual algebra of diassociative algebras \cite{loday}. A dendriform algebra is given by a vector space $A$ together with two bilinear maps $\prec, \succ~: A \times A \rightarrow A$ satisfying three new identities (see Definition \ref{defn-dend-alg}). Dendriform algebras are splitting of associative algebras in the sense that the sum operation $\prec + \succ$ in a dendriform algebra turns out to be associative. Dendriform algebras are widely studied in last twenty years as it meets several branches of mathematics, including probability, combinatorics, operads, Lie and Leibniz algebras, arithmetic on planar binary trees and quantum field theory \cite{aguiar , farb-guo, foissy, frab, guo, loday50, ronco}, see also the references therein for more details. In \cite{loday} Loday defines cohomology for a dendriform algebra with trivial coefficients.
The general cohomology theory for algebras over binary quadratic operad was given in \cite{bal}. One may also look at the book by Loday and Vallette \cite{lod-val-book}.
An explicit description for the cohomology of dendriform algebras is given in \cite{Das} using the notion of `multiplicative operad'.

\medskip

Deformations of some algebraic structure were initiated with the seminal work of Gerstenhaber for associative algebras \cite{G2} and subsequently generalized to Lie algebras by Nijenhuis and Richardson \cite{nij-ric}. Deformations of various other algebraic structures were also studied over the years \cite{bal-def , gers-sch1 , gers-sch2, maj-mukh}. In \cite{Das} the author explicitly studied the deformation of dendriform algebras and the governing cohomology.

\medskip

%In this paper, we introduce a notion of a finite group action on dendriform algebra. 
Our aim in this paper is to study dendriform algebras equipped with finite group actions. Such actions are the algebraic version of topological spaces equipped with finite group actions \cite{bredon67}. Let $G$ be a finite group and consider the category $\mathrm{Rep}(G)$ of representations of $G$. A dendriform algebra equipped with a $G$-action is the same as a dendriform algebra in the category $\mathrm{Rep}(G)$. Motivated by the examples of usual dendriform algebras, we provide examples of equivariant dendriform algebras. Namely, we show that Rota-Baxter operators in  $\mathrm{Rep}(G)$ or more generally $\mathcal{O}$-operators in  $\mathrm{Rep}(G)$ induces equivariant dendriform algebras. The tensor module of an object in $\mathrm{Rep}(G)$ also has an equivariant dendriform algebra structure.

\medskip

In the next, we apply the Bredon cohomology approach of topological $G$-spaces to equivariant dendriform algebras. We define cohomology for equivariant dendriform algebras, which can be seen as an equivariant version of the cohomology introduced in \cite{Das}. 
The cochain groups defining the cohomology can be given the structure of an operad with a multiplication. Thus, by a result of Gerstenhaber and Voronov \cite{gers-voro}, the cochain groups inherit a homotopy $G$-algebra structure and the cohomology carries a Gerstenhaber algebra structure.
We show that there is a morphism from the cohomology of equivariant dendriform algebras to the cohomology of corresponding equivariant associative algebras.

\medskip

Finally, we define an equivariant one-parameter formal deformation of equivariant dendriform algebras and show how equivariant cohomology control equivariant deformations of such algebras.
%he vanishing of the equivariant second cohomology implies the rigidity of the structure. The obstruction to extending a finite order deformation turns out to be a $3$-cocycle. As a consequence, the vanishing of the third cohomology ensures that a finite order deformation is always extendable. Thus, in this case, any $2$-cocycle is the linear term of some formal one-parameter deformation of the structure.

\section{Dendriform algebras and its cohomology}\label{sec-2}
In this section, we recall some preliminaries on dendriform algebras, their representations and cohomology. Our main references are \cite{loday, Das}.
\begin{defn} \label{defn-dend-alg}
A dendriform algebra is a $\mathbb{K}$-vector space $A$ together with two $\mathbb{K}$-bilinear maps $\prec, \succ ~: A \times A \rightarrow A$ satisfying
\begin{align}
 (a \prec b) \prec c =~& a \prec (b \prec c + b \succ c), \label{dend-eqn-1}\\
 (a \succ b) \prec c =~&  a \succ (b \prec c), \label{dend-eqn-2}\\
 (a \prec b + a \succ b) \succ c =~& a \succ (b \succ c), \label{dend-eqn-3}
\end{align}
for all $a, b , c \in A.$
\end{defn}

A dendriform algebra as above may be denoted by $(A, \prec, \succ)$ or simply by $A$. Let $A = (A, \prec, \succ)$ and $A' = (A', \prec', \succ')$ be two dendriform algebras.
A morphism between them consists of a linear map $f: A \to A'$ preserving the structure maps, that is,
$f(a \prec b)= f(a) \prec' f(b)$ and
$f(a \succ b)= f(a) \succ' f(b).$

\medskip

Let $C_n$ be the set of first $n$ natural numbers. For convenience, we denote the elements of $C_n$ by $\{ [1], [2], \ldots, [n] \}.$
For any $m , n \geq 1$ and $1 \leq i \leq m,$ we define certain combinatorial maps $R_0 (m; \overbrace{1, \ldots, 1, \underbrace{n}_{i\text{-th place}}, 1, \ldots, 1}^{m}) : C_{m+n-1} \rightarrow C_m$  and  $R_i (m; \overbrace{1, \ldots, 1, \underbrace{n}_{i\text{-th place}}, 1, \ldots, 1}^{m}) : C_{m+n-1} \rightarrow \mathbb{K}[C_n]$   by
\begin{align*} R_0 (m; 1, \ldots, 1, n, 1, \ldots, 1) ([r]) ~=~
\begin{cases} [r] ~~~ &\text{ if } ~~ r \leq i-1 \\ [i] ~~~ &\text{ if } i \leq r \leq i +n -1 \\
[r -n + 1] ~~~ &\text{ if } i +n \leq r \leq m+n -1 \end{cases}
\end{align*}

\begin{align*} R_i (m; 1, \ldots, 1, n, 1, \ldots, 1) ([r]) ~=~
\begin{cases} [1] + [2] + \cdots + [n] ~~~ &\text{ if } ~~ r \leq i-1 \\ [r - (i-1)] ~~~ &\text{ if } i \leq r \leq i +n -1 \\
[1]+ [2] + \cdots + [n] ~~~ &\text{ if } i +n \leq r \leq m+n -1. \end{cases}
\end{align*}

Let $A$ be any vector space. Consider the collection of vector spaces
\begin{align*}
\mathcal{O}(n) := \mathrm{Hom}_{\mathbb{K}} (\mathbb{K}[C_n] \otimes A^{\otimes n}, A), ~~~~ \text{ for } n \geq 1.
\end{align*}
For $f \in \mathcal{O}(m), ~g \in \mathcal{O}(n)$ and $1 \leq i \leq m$, we define
$f \circ_i g \in \mathcal{O}(m+n-1)$ by
\begin{align*}
&(f \circ_i g) ([r]; a_1, \ldots, a_{m+n-1}) \\
&= f \big( R_0 (m; 1, \ldots, n, \ldots, 1)[r]; a_1, \ldots, a_{i-1}, g (R_i (m; 1, \ldots, n, \ldots, 1)[r]; a_i, \ldots, a_{i+n-1}), a_{i+n}, \ldots, a_{m+n-1} \big)
\end{align*}
for $[r] \in C_{m+n-1}$ and $a_1, \ldots, a_{m+n-1} \in A$.

It has been shown in \cite{Das} that the collection of vector spaces $\{ \mathcal{O}(n) \}_{n \geq 1}$ together with the partial compositions $\circ_i$ forms a non-symmetric operad with the identity element $\mathrm{id} \in \mathcal{O}(1)$ is given by $\mathrm{id} ([1]; a) = a,$ for all $a \in A$.

Using partial compositions in an operad, one can define a circle product $\circ : \mathcal{O}(m) \otimes \mathcal{O}(n) \rightarrow \mathcal{O}(m+n-1)$ by
\begin{align}\label{dend-operad-circ}
f \circ g = \sum_{i=1}^m ~(-1)^{(i-1)(n-1)}~ f \circ_i g, ~~~ f \in \mathcal{O}(m),~ g \in \mathcal{O}(n).
\end{align}

%\begin{defn}\label{dend-mul}
Let $(A, \prec, \succ)$ be a dendriform algebra. Define an element $\pi_A \in \mathcal{O}(2) = \mathrm{Hom}_{\mathbb{K}} (\mathbb{K}[C_2] \otimes A^{\otimes 2}, A)$ by
$$ \pi_A ([r]; a, b) = \begin{cases}  a \prec b & \mbox{ if }  [r] = [1]\\
a \succ b & \mbox{ if } [r] = [2].
\end{cases} $$

Then $\pi_A \in \mathcal{O}(2)$ defines a multiplication in the above defined operad, that is, $\pi_A$ satisfies $\pi_A \circ_1 \pi_A = \pi_A \circ_2 \pi_A$ (equivalently $\pi_A \circ \pi_A = 0$).
%\end{defn}
\begin{defn}
A representation of $A$ is given by a vector space $M$ together with two left actions 
\begin{align*}
\prec ~: A \otimes M \rightarrow M \qquad \succ ~: A \otimes M \rightarrow M
\end{align*}
and two right actions
\begin{align*}
\prec ~: M \otimes A \rightarrow M \qquad \succ ~: M \otimes A \rightarrow M
\end{align*}
satisfying the $9$ identities where each pair of $3$ identities correspond to the identities (\ref{dend-eqn-1})-(\ref{dend-eqn-3}) with exactly one of $a, b, c$ is replaced by an element of $M$.
%\begin{align*}
%& (a \prec b) \prec m = a \prec (b \prec m + b \succ m),\\
%& (a \succ b) \prec m =  a \succ (b \prec m),\\
%& (a \prec b + a \succ b) \succ m = a \succ (b \succ m),\\
%& \\
%& (a \prec m) \prec c = a \prec (m \prec c + m \succ c),\\
%& (a \succ m) \prec c =  a \succ (m \prec c),\\
%& (a \prec m + a \succ m) \succ c = a \succ (m \succ c), \\
%& \\
%& (m \prec b) \prec c = m \prec (b \prec c + b \succ c),\\
%& (m \succ b) \prec c =  m \succ (b \prec c),\\
%& (m \prec b + m \succ b) \succ c = m \succ (b \succ c),
%\end{align*}
%for all $a, b, c \in A$ and $m \in M$.
\end{defn}

See \cite{Das} for more details.
%\begin{remark}\label{remark-repn}
To rewrite the $9$ identities of a representation in a more compact form, we define two maps $\theta_1 : \mathbb{K}[C_2] \otimes (A \otimes M) \rightarrow M$ and $\theta_2 : \mathbb{K}[C_2] \otimes (M \otimes A) \rightarrow M$ by

$$ \theta_1 ([r]; a, m) = \begin{cases} a \prec m ~~~ &\mathrm{ if } ~~ [r]=[1] \\
a \succ m ~~~ &\mathrm{ if } ~~ [r] = [2] \end{cases} \qquad  \mathrm{and}  \qquad
 \theta_2 ([r];  m, a) = \begin{cases} m \prec a ~~~ &\mathrm{ if } ~~ [r]=[1] \\
m \succ a ~~~ &\mathrm{ if } ~~ [r] = [2]. \end{cases}$$
Then a representation can be equivalently described as
\begin{align}
\theta_1 \big(  R_0 (2; 1, 2) [s]; ~ a,~ \theta_1 ( R_2 (2; 1, 2)[s]; b, m)  \big) =~& \theta_1 (R_0 (2; 2, 1)[s] ; ~\pi_A (R_1 (2; 2, 1)[s]; a, b),~ m), \label{rep-1}\\
\theta_1 \big(  R_0 (2; 1, 2) [s]; ~ a,~ \theta_2 ( R_2 (2; 1, 2)[s]; m, c)  \big) =~& \theta_2 (R_0 (2; 2, 1)[s] ; ~\theta_1 (R_1 (2; 2, 1)[s]; a, m),~ c), \label{rep-2}\\
\theta_2 \big(  R_0 (2; 1, 2) [s]; ~ m,~ \pi_A ( R_2 (2; 1, 2)[s]; b, c)  \big) =~& \theta_2 (R_0 (2; 2, 1)[s] ; ~\theta_2 (R_1 (2; 2, 1) [s] ; m, b),~ c), \label{rep-3}
\end{align}
for all $[s] \in C_3$ and $a, b, c \in A$, $m \in M$.
%\end{remark}

Any dendriform algebra $A$ is a representation of itself with $\theta_1 = \theta_2 = \pi_A$. In such a case, all the above three identities (\ref{rep-1})-(\ref{rep-3}) are equivalent to $\pi_A \circ_2 \pi_A = \pi_A \circ_1 \pi_A$, which holds automatically as $\pi_A$ defines a multiplication.

Let $(A, \prec, \succ)$ be a dendriform algebra and $M$ be a representation of it. The group $C^n_{\mathrm{dend}} (A, M)$ of $n$-cochains of $A$ with coefficients in $M$  is given by
\begin{align*}
C^n_{\mathrm{dend}} (A, M) := \mathrm{Hom}_\mathbb{K} (\mathbb{K}[C_n] \otimes A^{\otimes n}, M), ~~~ \text{ for } n \geq 1. 
\end{align*}
There are maps $\delta_i : C^n_{\mathrm{dend}} (A, M) \to C^{n+1}_{\mathrm{dend}} (A, M)$, for $0 \leq i \leq n+1$, given as follows
\begin{align*}
&(\delta_i f) ([r]; a_1 , \ldots, a_{n+1}) = \\
&\begin{cases}
                \theta_1 \big( R_0 (2;1,n) [r]; ~a_1, f (R_2 (2;1,n)[r]; a_2, \ldots, a_{n+1})   \big),~~~\text{for }~~ i=0,\\
          f \big(  R_0 (n; 1, \ldots, 2, \ldots, 1)[r]; a_1, \ldots, a_{i-1}, \pi_A (R_i (1, \ldots, 2, \ldots, 1)[r]; a_i, a_{i+1}), a_{i+2}, \ldots, a_{n+1}   \big),~~~\text{for }~~ 1\leq i \leq n,\\
              \theta_2 \big( R_0 (2; n, 1) [r];~ f (R_1 (2;n,1)[r]; a_1, \ldots, a_n), a_{n+1}   \big),~~~\text{for }~~ i=n+1.
               \end{cases}
\end{align*}
Finally, the coboundary map $\delta^n : C^n_{\mathrm{dend}} (A, M) \to C^{n+1}_{\mathrm{dend}} (A, M)$ is defined as 
\begin{align}\label{non-eq-cob}
\delta^n= \sum_{0 \leq i \leq n+1} (-1)^i \delta_i.
\end{align}
The corresponding cohomology groups are denoted by $H^n_{\text{dend}} (A, M)$, for $n \geq 1$. When $M = A$, the cohomology is induced from the multiplication $\pi_A$ on the operad $\{ \mathcal{O}(n) \}_{n \geq 1}$, see \cite{Das} for details.
When $M = \mathbb{K}$ with the trivial representation of $A$ ($\theta_1 = 0$ and $\theta_2 = 0$), the corresponding coboundary operator coincides with the one defined by Loday \cite{loday}.

\section{Equivariant dendriform algebras}
In this section, we study finite group actions on dendriform algebras. Let $G$ be a fixed finite group. We first define the category $\mathrm{Rep}(G)$ of representations of $G$. 
\begin{defn}
A representation of $G$ (over $\mathbb{K}$) consists of a pair $(V, \varrho)$ of a vector space $V$ over $\mathbb{K}$ and a map $\varrho : G \times V \rightarrow V$ satisfying
\begin{itemize}
\item for each $g \in G$, the map $\varrho ( g, ~) : V \rightarrow V$ is linear,
\item $\varrho (e, v) = v$, ~~$\varrho ( g, \varrho (h, v)) = \varrho (gh, v ),$
\end{itemize}
for all $g, h \in G, v \in V$, where $e$ is the neutral element of the group $G$.
\end{defn}

It follows from the above definition that for each $g \in G$, the linear map $\varrho ( g, ~) : V \rightarrow V$ is invertible with inverse $\varrho ( g^{-1}, ~) : V \rightarrow V$.
Note that any group $G$ has a representation on $V = \mathbb{K}[G]$, the vector space generated by the elements of $G$ with $\varrho ( g, \sum_{h \in G} \alpha_h h ) = \sum_{h \in G} \alpha_h (gh).$

\begin{defn}
Let $(V, \varrho_V)$ and $(W, \varrho_W)$ be two representations of $G$. A morphism between them consists of a linear map $\phi : V \rightarrow W$ satisfying
\begin{align*}
\phi \circ \varrho_V ( g, ~) = \varrho_W ( g, ~) \circ \phi, ~~~ \text{ for all } g \in G.
\end{align*}
\end{defn}

For any representation $(V, \varrho)$, the identity map $\text{id}_V : V \rightarrow V$ is a morphism from $(V, \varrho)$ to itself. Let $(V, \varrho_V), (W, \varrho_W)$ and $(U, \varrho_U)$ be three representations of $G$; let $\phi : V \rightarrow W$ and $\psi : W \rightarrow U$ be morphisms between representations. Then the composition $\psi \circ \phi : V \rightarrow U$ is also a morphism between representations.

We define $\text{Rep}(G)$ to be the category whose objects are representations of $G$ and morphisms are given by morphism between representations. The category $\text{Rep}(G)$ is called the category of representations of $G$.

Let $(V, \varrho_V)$ and $(W, \varrho_W)$ be in $\text{Rep}(G)$. Their tensor product is defined by $(V \otimes W, \varrho_{V \otimes W})$ where $\varrho_{V \otimes W}$ is given by
\begin{align*}
\varrho_{V \otimes W} : G \times (V \otimes W) \rightarrow (V \otimes W), ~~~( g, v \otimes w) \mapsto \varrho_V (g, v) \otimes \varrho_W (g, w).
\end{align*}
With respect to the above tensor product, the category $\text{Rep}(G)$ is a monoidal category.

%If $(V' , \varrho_{V'})$ and $(W', \varrho_{W'})$ are also in $\text{Rep}(G)$ and $\phi : V \rightarrow V'$, $\psi : W \rightarrow W'$ are morphism between representations, then $\phi \otimes \psi: V \otimes W \rightarrow V' \otimes W'$ defines a morphism of representations from $(V \otimes W, \varrho_{V \otimes W})$ to $(V' \otimes W', \varrho_{V' \otimes W'})$. Therefore, the tensor product defines a bifunctor $\otimes : \text{Rep}(G) \times \text{Rep}(G) \rightarrow \text{Rep}(G).$ There is also a natural isomorphism $\alpha_{V, W, U} : (V \otimes W) \otimes U \rightarrow V \otimes (W \otimes U)$ between representations given by $\alpha_{V, W, U} ((v \otimes w) \otimes u) \mapsto v \otimes (w \otimes u)$ which satisfies the usual pentagonal diagram. Consider the object $(\mathbb{K}, \varrho_{id})$ in $\text{Rep}(G)$, where $\varrho_{id} : G \times \mathbb{K} \rightarrow \mathbb{K}$ is given by $\varrho_{id} (g, c )= c,$ for all $g \in G, c \in \mathbb{K}.$ Then $\lambda_V : \mathbb{K} \otimes V \rightarrow V, ~ 1 \otimes v \mapsto v$ and $\mu_V : V \otimes \mathbb{K} \rightarrow V, ~ v \otimes 1 \mapsto v$ are natural isomorphisms satisfying the standard triangle diagram. Thus, one obtain the following.

%\begin{thm}
%\end{thm}

%\noindent \underline{ Multilinear maps in the category $\text{Rep}(G)$.}

Let $(V, \varrho_V)$ and $(W, \varrho_W)$ be in $\text{Rep}(G)$. Consider the tensor product representation $(V^{\otimes n}, \varrho_{V^{\otimes n}})$ given by $ \varrho_{V^{\otimes n}} : G \times  V^{\otimes n} \rightarrow  V^{\otimes n}$,
\begin{align*}
\varrho_{V^{\otimes n}} ( g, v_1 \otimes \cdots \otimes v_n) = \varrho_V (g, v_1) \otimes \cdots \otimes \varrho_V (g, v_n).
\end{align*}
A morphism from the representation $(V^{\otimes n}, \varrho_{V^{\otimes n}})$ to $(W, \varrho_W)$ is called a multilinear map in $\text{Rep}(G)$. Thus, it is given by a linear map $\phi : V^{\otimes n} \rightarrow W$ satisfying 
\begin{align*}
\phi ( \varrho_V (g, v_1) \otimes \cdots \otimes \varrho_V (g, v_n)) = \varrho_W ( g, \phi (v_1 \otimes \cdots \otimes v_n)).
\end{align*}
In this case, we write $\phi \in \text{Hom}_{\text{Rep}(G)} (V^{\otimes n}, W).$\\

%\noindent \underline{ Gerstenhaber's $\circ_i$-operations in $\text{Rep}(G)$.}

\subsection{Dendriform algebras and Rota-Baxter operators in Rep(G)}

\begin{defn}\label{equivariant dend}
A dendriform algebra in $\text{Rep}(G)$ consists of an object $V = (V, \varrho_V)$ in $\text{Rep}(G)$ together with morphisms $\prec, \succ \in \text{Hom}_{\text{Rep}(G)} (V^{\otimes 2}, V)$ satisfying the usual dendriform identities
\begin{align}
 (a \prec b) \prec c =~& a \prec (b \prec c + b \succ c), \label{dend-eqne-1}\\
 (a \succ b) \prec c =~&  a \succ (b \prec c), \label{dend-eqne-2}\\
 (a \prec b + a \succ b) \succ c =~& a \succ (b \succ c), ~~~ \text{ for all } a, b , c \in V. \label{dend-eqne-3}
\end{align}
\end{defn}

A dendriform algebra in $\text{Rep}(\{e\})$ is the usual dendriform algebra. It follows from (\ref{dend-eqne-1})-(\ref{dend-eqne-3}) that the sum $\star = \prec+ \succ \in \text{Hom}_{\text{Rep}(G)} (V^{\otimes 2}, V)$ defines an associative algebra structure on $V = (V, \varrho_V)$ in $\text{Rep}(G)$. In \cite{MY19} the authors call such an object an equivariant associative algebra.

\begin{defn}
Let $V = (V, \varrho_V)$ be an associative algebra in $\text{Rep}(G)$ with multiplication $\mu \in \text{Hom}_{\text{Rep}(G)} (V^{\otimes 2}, V)$. A morphism $R \in \text{Hom}_{\text{Rep}(G)} (V, V)$ is said to be a Rota-Baxter operator (of weight $0$) on $V$ if $R$ satisfies
\begin{align*}
\mu (R(a), R(b) ) = R ( \mu (a, Rb) + \mu (Ra, b) ), ~~~ \text{ for all } a, b \in V.
\end{align*}
\end{defn}

In \cite{aguiar} Aguiar showed that a Rota-Baxter operator on an associative algebra induces a dendriform structure. Here we extend this result in the category $\text{Rep}(G)$.

\begin{exam} Let $R \in \text{Hom}_{\text{Rep}(G)} (V, V)$ be a Rota-Baxter operator on the associative algebra $(V, \mu)$ in $\mathrm{Rep}(G)$.
Then $R$ induces a dendriform structure on $V = (V, \varrho_V)$ with
\begin{align*}
a \prec b = \mu ( a, Rb)    \quad \text{and } \quad a \succ b = \mu ( Ra, b).
\end{align*}
\end{exam}

Thus it follows that a Rota-Baxter operator $R$ induces a new associative structure on $(V, \varrho_V)$ given by
\begin{align}\label{rota-ass}
\overline{\mu} (a, b ) =  \mu ( a, Rb) + \mu (Ra, b).
\end{align}
This associative structure on $(V, \varrho_V)$ can also be described using weak pseudotwistor in $\mathrm{Rep} (G).$
Weak pseudotwistors were studied in the context of usual dendriform algebras by Brzezi\'{n}ski \cite{bere}.

\begin{defn}
Let $(V, \mu)$ be an associative algebra in $\text{Rep}(G)$. A morphism $T \in \text{Hom}_{\mathrm{Rep}(G)} (V^{\otimes 2}, V^{\otimes 2} )$ is called a weak pseudotwistor if there exists a morphism $\tau \in \text{Hom}_{\mathrm{Rep}(G)} (V^{\otimes 3}, V^{\otimes 3})$ such that the following diagram commute
\[
\xymatrixrowsep{0.5in}
\xymatrixcolsep{0.7in}
\xymatrix{
V^{\otimes 3} \ar[r]^{\mathrm{id}_V \otimes (\mu \circ T)} \ar[d]_\tau & V^{\otimes 2} \ar[d]_{T} & V^{\otimes 3} \ar[d]^{\tau} \ar[l]_{(\mu \circ T) \otimes \mathrm{id}_V} \\
V^{\otimes 3} \ar[r]_{\mathrm{id}_V \otimes \mu} &  V^{\otimes 2}  &  V^{\otimes 3} \ar[l]^{\mu \otimes \mathrm{id}_V}
}
\]
The morphism $\tau$ is called a weak companion of $T$.
\end{defn}
 Note that the identity map $T = \mathrm{id}_{V^{\otimes 2}}$ is a weak pseudotwistor with weak companion $\tau = \mathrm{id}_{V^{\otimes 3}}.$

\begin{prop}\label{twi}
Let $T \in \mathrm{Hom}_{\mathrm{Rep}(G)} (V^{\otimes 2}, V^{\otimes 2})$ be a weak pseudotwistor with weak companion $\tau$. Then $\mu \circ T \in  \mathrm{Hom}_{\mathrm{Rep}(G)} (V^{\otimes 2}, V)$ defines a new associative structure on $V$ in $\mathrm{Rep} (G).$
\end{prop}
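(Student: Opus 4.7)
The plan is to verify the associativity identity $(\mu \circ T) \circ (\mathrm{id}_V \otimes (\mu \circ T)) = (\mu \circ T) \circ ((\mu \circ T) \otimes \mathrm{id}_V)$ as morphisms $V^{\otimes 3} \to V$ in $\mathrm{Rep}(G)$, using only the defining commutative diagram of the weak pseudotwistor together with the associativity of $\mu$.

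First I would address the equivariance issue, which is essentially automatic: since $T$ and $\mu$ are morphisms in $\mathrm{Rep}(G)$, so is their composite $\mu \circ T \in \mathrm{Hom}_{\mathrm{Rep}(G)}(V^{\otimes 2}, V)$. Hence the only content is the associativity of the bilinear operation $\mu \circ T$.

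Next I would read off the two identities encoded by the defining diagram, namely
\begin{align*}
T \circ (\mathrm{id}_V \otimes (\mu \circ T)) &= (\mathrm{id}_V \otimes \mu) \circ \tau, \\
T \circ ((\mu \circ T) \otimes \mathrm{id}_V) &= (\mu \otimes \mathrm{id}_V) \circ \tau,
\end{align*}
and then post-compose both with $\mu$. This yields
\begin{align*}
(\mu \circ T) \circ (\mathrm{id}_V \otimes (\mu \circ T)) &= \mu \circ (\mathrm{id}_V \otimes \mu) \circ \tau, \\
(\mu \circ T) \circ ((\mu \circ T) \otimes \mathrm{id}_V) &= \mu \circ (\mu \otimes \mathrm{id}_V) \circ \tau.
\end{align*}
Finally, the associativity of $\mu$ gives $\mu \circ (\mathrm{id}_V \otimes \mu) = \mu \circ (\mu \otimes \mathrm{id}_V)$, so the two right-hand sides coincide, completing the verification.

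There is no serious obstacle here: the proof is a purely diagrammatic manipulation, and the weak companion $\tau$ plays the role of a placeholder on the right-hand side that cancels out once $\mu$ is applied. The only thing to be mindful of is to use the diagram in the correct direction (reading the outer squares as equalities of composites $V^{\otimes 3} \to V^{\otimes 2}$), and to note that the identity $T = \mathrm{id}_{V^{\otimes 2}}$ with $\tau = \mathrm{id}_{V^{\otimes 3}}$ recovers the original associative multiplication $\mu$, providing a consistency check.
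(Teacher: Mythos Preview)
Your proof is correct. The paper states this proposition without proof, so there is no argument to compare against; your diagrammatic verification---post-composing the two identities from the weak pseudotwistor diagram with $\mu$ and invoking the associativity of $\mu$---is exactly the standard and intended one.
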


In the following, we shall construct a weak pseudotwistor out of a Rota-Baxter operator.

\begin{prop}
Let $R \in \mathrm{Hom}_{\mathrm{Rep}(G)} (V, V)$ be a Rota-Baxter operator on an associative algebra $(V, \mu)$. Then the map $T \in \mathrm{Hom}_{\mathrm{Rep}(G)} (V^{\otimes 2}, V^{\otimes 2})$ given by
\begin{align*}
T(a, b) = a \otimes R(b) + R(a) \otimes b
\end{align*}
is a weak pseudotwistor with weak companion $\tau ( a \otimes b \otimes c) = R(a) \otimes R(b) \otimes c + R(a) \otimes b \otimes R(c) + a \otimes R(b) \otimes R(c).$
\end{prop}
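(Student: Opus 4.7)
The plan is to verify the two requirements of the definition of a weak pseudotwistor: that $T$ and $\tau$ are indeed morphisms in $\mathrm{Rep}(G)$, and that the two squares of the defining diagram commute. I will argue the equivariance first (it will be nearly automatic) and then do the commuting-square check by a direct calculation on elementary tensors, in which the Rota-Baxter identity will appear precisely where needed.

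For equivariance, since $R \in \mathrm{Hom}_{\mathrm{Rep}(G)}(V,V)$ and the tensor action $\varrho_{V^{\otimes n}}$ is diagonal, each of the three summands $a \otimes R(b)$, $R(a) \otimes b$, as well as the three summands defining $\tau$, is built out of equivariant components slotted into the diagonal action. Hence $T$ and $\tau$ intertwine the $G$-actions on $V^{\otimes 2}$ and $V^{\otimes 3}$ respectively, so both are morphisms in $\mathrm{Rep}(G)$.

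For the first square, I would compute the two ways around applied to $a \otimes b \otimes c$. Going via $T$, one has
\begin{align*}
T(\mathrm{id}_V \otimes (\mu \circ T))(a \otimes b \otimes c)
&= T\bigl(a \otimes (\mu(b,Rc) + \mu(Rb,c))\bigr) \\
&= a \otimes R\bigl(\mu(b,Rc) + \mu(Rb,c)\bigr) + R(a) \otimes \bigl(\mu(b,Rc) + \mu(Rb,c)\bigr),
\end{align*}
and the Rota-Baxter identity collapses $R(\mu(b,Rc)+\mu(Rb,c))$ to $\mu(Rb,Rc)$, giving
\[
a \otimes \mu(Rb,Rc) + R(a) \otimes \mu(b,Rc) + R(a) \otimes \mu(Rb,c).
\]
Going via $\tau$, one has
\[
(\mathrm{id}_V \otimes \mu) \circ \tau(a \otimes b \otimes c) = R(a)\otimes\mu(Rb,c) + R(a)\otimes\mu(b,Rc) + a\otimes\mu(Rb,Rc),
\]
which is the same expression.

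The second square is entirely symmetric: one applies $T$ to $(\mu(a,Rb)+\mu(Ra,b))\otimes c$, expands, and uses the Rota-Baxter identity on the first tensor slot to obtain $\mu(Ra,Rb)\otimes c + \mu(a,Rb)\otimes R(c) + \mu(Ra,b)\otimes R(c)$, which matches $(\mu \otimes \mathrm{id}_V)\circ\tau(a\otimes b\otimes c)$. Neither step is a real obstacle; the only subtlety worth flagging is making sure the Rota-Baxter relation is applied on the correct tensor factor in each square, and that the three summands of $\tau$ are exactly the ones needed to absorb the leftover terms from both squares simultaneously — which motivates the specific form of $\tau$.
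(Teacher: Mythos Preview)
Your proposal is correct and follows essentially the same route as the paper: a direct expansion of both paths around each square on elementary tensors, with the Rota-Baxter identity providing the sole nontrivial simplification, and the second square dismissed by symmetry. The only difference is that you explicitly verify $T$ and $\tau$ lie in $\mathrm{Hom}_{\mathrm{Rep}(G)}$, which the paper leaves implicit.
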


 \begin{proof}
Note that \begin{align*}
 &\big( T \circ (\mathrm{id}_V \otimes ( \mu \circ T)) \big ) (a, b, c) \\
 &= T ( a, b R(c) + R(b) c ) \\
 &= a \otimes R (b R(c)) + a \otimes R ( R(b) c ) + R(a) \otimes b R(c) + R(a) \otimes R(b) c
 \end{align*}
 and
 \begin{align*}
 &\big( (\mathrm{id}_V \otimes \mu ) \circ \tau \big)(a, b, c) \\
 &= (\mathrm{id}_V  \otimes \mu ) \big( R(a) \otimes R(b) \otimes c + R(a) \otimes b \otimes R(c) + a \otimes R(b) \otimes R(c) \big) \\
 &= R(a) \otimes R(b) c + R(a) \otimes b R(c) + a \otimes R(b) R(c).
 \end{align*}
 Since $R$ is a Rota-Baxter operator, we have $T \circ (\mathrm{id}_V \otimes ( \mu \circ T)) = (\mathrm{id}_V  \otimes \mu ) \circ \tau $. Similarly, one can show that $T \circ ((\mu \circ T) \otimes \mathrm{id}_V) = (\mu \otimes \mathrm{id}_V) \circ \tau$. Hence the proof.
\end{proof}

Thus, it follows from Proposition \ref{twi} that $V$ inherits a new associative algebra with $\overline{\mu} (a \otimes b ) = \mu ( a, R(b)) + \mu (R(a) \otimes b).$ This gives an alternative view of the associative product (\ref{rota-ass}) in terms of weak pseudotwistor.

\medskip

Next we introduce a more general operator and construct dendriform algebra from that.  Let $(V, \mu)$ be an associative algebra in $\text{Rep}(G)$. A bimodule over it consists of an object $(M, \varrho_M)$ in $\text{Rep}(G)$ together with morphisms $l \in \text{Hom}_{\mathrm{Rep}(G)} ( V \otimes M, M)$ and $r \in \text{Hom}_{\mathrm{Rep}(G)} (M \otimes V, M)$ satisfying the usual bimodule conditions
\begin{align*}
l (\mu (a, b), m) = l (a, l(b, m)),~~~~~ r (l(a,m), b) = l (a, r(m, b)) ~~~~ \text{ and } ~~~ r(r(m, a), b ) = r (m, \mu (a, b )),~~~
\end{align*}
for all $a, b \in V$ and  $m \in M$. 

With respect to the above notations, we have the following. The proof is similar to the classical case.

\begin{prop} (Semi-direct product)
Let $(V, \mu)$ be an associative algebra in ${\mathrm{Rep}(G)}$ and $(M, l, r)$ a bimodule over it. Then the direct sum $(V \oplus M, \varrho_V \oplus \varrho_M)$ inherits an associative structure in ${\mathrm{Rep}(G)}$ with product
\begin{align*}
\mu' ((a,m)\otimes (b,n)) = (\mu (a, b), l (a,n) + r (m, b)), ~~~ \text{ for } (a,m), (b, n) \in V \oplus M. 
\end{align*}
\end{prop}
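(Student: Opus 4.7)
The plan is to verify the two conditions implicit in the statement: first, that $\mu'$ is a genuine morphism in $\mathrm{Rep}(G)$, that is, it is equivariant with respect to the diagonal action $\varrho_V \oplus \varrho_M$; and second, that $\mu'$ is associative. Since the proposition asserts both, but the real content is associativity (equivariance is essentially automatic), I would begin by disposing of the equivariance check and then spend the main work on associativity.

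For equivariance, I would unpack $\varrho_{V \oplus M}$ on a tensor $(a,m) \otimes (b,n)$. The diagonal action sends this to $(\varrho_V(g,a)+\varrho_M(g,m)) \otimes (\varrho_V(g,b)+\varrho_M(g,n))$, and after applying $\mu'$ the first component is $\mu(\varrho_V(g,a), \varrho_V(g,b))$, which equals $\varrho_V(g, \mu(a,b))$ because $\mu \in \mathrm{Hom}_{\mathrm{Rep}(G)}(V^{\otimes 2}, V)$. The second component is $l(\varrho_V(g,a), \varrho_M(g,n)) + r(\varrho_M(g,m), \varrho_V(g,b))$, which equals $\varrho_M(g, l(a,n) + r(m,b))$ using that $l$ and $r$ are themselves morphisms in $\mathrm{Rep}(G)$. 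So $\mu'$ is indeed $G$-equivariant.

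For associativity, let $x=(a,m)$, $y=(b,n)$, $z=(c,p)$. I would expand $\mu'(\mu'(x,y), z)$ to obtain $(\mu(\mu(a,b),c),\ l(\mu(a,b),p) + r(l(a,n)+r(m,b), c))$ and $\mu'(x, \mu'(y,z))$ to obtain $(\mu(a,\mu(b,c)),\ l(a, l(b,p) + r(n,c)) + r(m, \mu(b,c)))$. Equality of the $V$-components is precisely the associativity of $\mu$. For the $M$-components, the three bimodule identities
\begin{align*}
l(\mu(a,b), p) &= l(a, l(b,p)), \\
r(l(a,n), c) &= l(a, r(n,c)), \\
r(r(m,b), c) &= r(m, \mu(b,c))
\end{align*}
produce exactly the terms needed after distributing the $r(\cdot, c)$ on the left-hand side and the $l(a, \cdot)$ on the right-hand side.

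I expect no genuine obstacle; the proof is a bookkeeping exercise combining the associativity of $\mu$ with the three bimodule axioms, together with a one-line equivariance verification. The only point that deserves a moment of care is checking that the cross terms $l(\mu(a,b), p)$ and $r(r(m,b),c)$ pair up with $l(a,l(b,p))$ and $r(m,\mu(b,c))$ respectively, while the genuinely mixed term $r(l(a,n), c)$ pairs with $l(a, r(n,c))$; once this pairing is made explicit, the identity is immediate. Consequently the proof can be presented compactly by stating the equivariance observation and then giving the side-by-side expansion of the two triple products.
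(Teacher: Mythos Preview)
Your proposal is correct and is precisely the classical argument the paper alludes to; the paper itself omits the proof entirely, stating only that it ``is similar to the classical case.'' Your two-step verification (equivariance of $\mu'$ via the $\mathrm{Rep}(G)$-morphism hypotheses on $\mu$, $l$, $r$, followed by associativity from the three bimodule identities) is exactly what is needed, and there is nothing to add.
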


\begin{defn}
An $\mathcal{O}$-operator on $(V, \mu)$ with respect to a bimodule $(M, l, r)$ is a 
morphism $T \in \mathrm{Hom}_{\mathrm{Rep}(G)} (M, V)$ satisfying
\begin{align*}
\mu (T(m), T(n) ) = T ( r (m, Tn) + l (Tm, n)), ~~~ \text{ for all } m, n \in M.
\end{align*}
\end{defn}

The notion of $\mathcal{O}$-operators (also called generalized Rota-Baxter operators) on usual associative algebras was first introduced by Uchino as an associative analogue of Poisson structures \cite{uchino}. Therefore, $\mathcal{O}$-operators in $\mathrm{Rep}(G)$ can be thought of as an associative analogue of equivariant Poisson structures. Note that a Rota-Baxter operator in $\mathrm{Rep}(G)$ is a $\mathcal{O}$-operator with respect to the adjoint bimodule.

$\mathcal{O}$-operators can be characterized in terms of its graph. The following is a generalization of \cite[Lemma 2.7]{uchino}.

\begin{prop}
Let $(V, \mu)$ be an associative algebra in ${\mathrm{Rep}(G)}$ and $(M, l, r)$ a bimodule over it. A morphism $T \in \mathrm{Hom}_{\mathrm{Rep}(G)} (M, V)$ is an $\mathcal{O}$-operator if and only if 
\begin{align*}
\mathrm{Gr} (T) = \{ (Tm, m) |~ m \in M \} \subset V \oplus M
\end{align*} 
is a subalgebra, where $V \oplus M$ is equipped with the semi-direct product algebra structure.
\end{prop}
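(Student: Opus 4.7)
The plan is to verify the two conditions for $\mathrm{Gr}(T)$ to be a subalgebra of the semi-direct product $(V \oplus M, \mu')$ in $\mathrm{Rep}(G)$, and show that the multiplicative closure condition is exactly the $\mathcal{O}$-operator identity, while the $G$-equivariance closure is automatic.

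First I would check that $\mathrm{Gr}(T)$ is closed under the $G$-action $\varrho_V \oplus \varrho_M$. Since $T \in \mathrm{Hom}_{\mathrm{Rep}(G)}(M,V)$, for any $m \in M$ and $g \in G$ we have $\varrho_V(g, Tm) = T(\varrho_M(g, m))$, so
\begin{align*}
(\varrho_V \oplus \varrho_M)(g, (Tm, m)) = (T(\varrho_M(g,m)), \varrho_M(g,m)) \in \mathrm{Gr}(T).
\end{align*}
This holds regardless of whether $T$ is an $\mathcal{O}$-operator, so the only nontrivial condition concerns closure under the semi-direct product.

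Next I would compute the product of two elements of $\mathrm{Gr}(T)$ directly from the definition of $\mu'$. For $m, n \in M$,
\begin{align*}
\mu'\big((Tm, m) \otimes (Tn, n)\big) = \big(\mu(Tm, Tn),\; l(Tm, n) + r(m, Tn)\big).
\end{align*}
This element lies in $\mathrm{Gr}(T)$ if and only if its first coordinate equals $T$ applied to its second coordinate, i.e.,
\begin{align*}
\mu(Tm, Tn) = T\big( l(Tm, n) + r(m, Tn) \big).
\end{align*}
This is precisely the defining identity of an $\mathcal{O}$-operator. Thus the $(\Leftarrow)$ direction follows by reading the equivalence leftward (taking the equation to hold for all $m, n \in M$), and the $(\Rightarrow)$ direction follows by reading it rightward.

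There is essentially no obstacle here; the statement is really a tautological reformulation once the semi-direct product is written out, and the only subtlety worth flagging is the $G$-equivariance of $\mathrm{Gr}(T)$, which I would mention explicitly because a subalgebra in $\mathrm{Rep}(G)$ must be a subrepresentation as well as closed under multiplication. I would present the proof as a single short calculation organized around the identity above, noting both implications simultaneously.
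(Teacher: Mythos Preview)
Your proof is correct. The paper actually states this proposition without proof, merely noting that it generalizes \cite[Lemma 2.7]{uchino}; your argument is the standard one and, in particular, your explicit check that $\mathrm{Gr}(T)$ is a $G$-subrepresentation (using $T \in \mathrm{Hom}_{\mathrm{Rep}(G)}(M,V)$) is exactly the extra ingredient needed beyond the non-equivariant case.
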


\begin{exam}
Let $T$ be an $\mathcal{O}$-operator on the associative algebra $(V, \mu)$ with respect to the bimodule $(M, l, r)$, then $M = (M, \varrho_M)$ inherits a dendriform algebra structure in $\text{Rep}(G)$ with
\begin{align*}
m \prec n = r (m, Tn)  \quad \text{ and } \quad  m \succ n = l (Tm, n), ~~~ \mathrm{ for }~~ m, n \in M.
\end{align*}
\end{exam}

Here one may ask the following question: whether every dendriform algebra in $\mathrm{Rep}(G)$ is induced from an $\mathcal{O}$-operator? Let $(A, \prec, \succ)$ be a dendriform algebra in $\mathrm{Rep}(G)$ with the corresponding associative algebra $(A, \prec + \succ )$ in $\mathrm{Rep}(G)$. We denote this associative algebra by $A_{\mathrm{ass}}.$ Then $A$ is an $A_{\mathrm{ass}}$-bimodule with $l (e, a) = e \succ a$ and $r (a, e) = a \prec e$, for $e \in A_{\mathrm{ass}}$, $a \in A$. With respect to this $A_{\mathrm{ass}}$-bimodule, it is easy to see that the identity map $\mathrm{id} : A \rightarrow A_{\mathrm{ass}}$ is an $\mathcal{O}$-operator. Moreover, the induced dendriform structure on $A$ is the given one. This shows that the answer is affirmative that asked at the beginning of this paragraph.
 
Another example of a dendriform algebra in $\mathrm{Rep}(G)$ arises from the tensor module of a group representation.

\begin{exam} (Tensor module in $\text{Rep}(G)$) Let $V = (V, \varrho_V) \in \mathrm{Rep}(G)$. Consider $(\overline{T}(V), \varrho_{\overline{T}(V)}) \in \text{Rep}(G)$ where $\overline{T}(V) = \oplus_{n \geq 1} V^{\otimes n}$ and 
\begin{align*}
\varrho_{\overline{T}(V)}|_{G \times V^{\otimes n}} = \varrho_{V^{\otimes n}} : G \times V^{\otimes n} \rightarrow V^{\otimes n}.
\end{align*}
Define 
\begin{align*}
(v_1 \otimes \cdots \otimes v_n) \prec (v_{n+1} \otimes \cdots \otimes v_{n+m} ) := \sum_{\sigma \in \mathrm{Sh}^1_{n,m}} v_{\sigma^{-1}(1)} \otimes \cdots \otimes v_{\sigma^{-1}(n+m)}, \\
(v_1 \otimes \cdots \otimes v_n) \succ (v_{n+1} \otimes \cdots \otimes v_{n+m} ) := \sum_{\sigma \in \mathrm{Sh}^2_{n,m}} v_{\sigma^{-1}(1)} \otimes \cdots \otimes v_{\sigma^{-1}(n+m)},
\end{align*}
where $\mathrm{Sh}^1_{n,m}$ and $\mathrm{Sh}^2_{n,m}$ are the disjoint exhaustive subsets of $\mathrm{Sh}_{n,m}$ given by
\begin{align*}
\mathrm{Sh}^1_{n,m} = \{ \sigma \in \mathrm{Sh}_{n, m} |~ \sigma (n) = n+m \} ~~\text{ and }~~ \mathrm{Sh}^2_{n,m} = \{ \sigma \in \mathrm{Sh}_{n, m} |~ \sigma (n+m) = n+m \}.
\end{align*}
Then $(\prec, \succ)$ defines a dendriform structure on $(\overline{T}(V), \varrho_{\overline{T}(V)})$.
\end{exam}

\subsection{Equivariant dendriform algebra in Bredon's sense}
Dendriform algebras in $\mathrm{Rep}(G)$ can also be seen as dendriform algebras equipped with an action of $G$. Therefore, they are algebraic version of topological $G$-spaces in the sense of Bredon \cite{bredon67}.
\begin{defn}\label{definition-group-action}
Let $A = (A, \prec, \succ)$ be a dendriform algebra and $G$ be a finite group. The group $G$ is said to act from the left on $A$ if there exists a map
$$\psi : G\times A \rightarrow A,~~ (g, a) \mapsto \psi (g, a) = ga$$ satisfying the following conditions
\begin{enumerate}
\item for each $g\in G$, the map $\psi_g : A \rightarrow A,~ a \mapsto ga$ is linear,
\item $ea= a$ for all $a \in A$, where $e \in G$ is the neutral element,
\item $g(ha) = (gh)a$ for all $g, h \in G$ and $a \in A$,
\item $g(a \prec b)= ga \prec gb$ and  $g(a \succ b)= ga \succ gb$, for all $g\in G$ and $a, b \in A.$
\end{enumerate}
We use the notation $(G, A)$ to denote a dendriform algebra $A$ equipped with an action of $G$.
\end{defn}
Let $(G, A)$ and $(G, A')$ be two dendriform algebras equipped with actions of $G$. A morphism between  $(G, A)$ and $(G, A')$ is a morphism of dendriform algebras
$f: A\to A'$
that satisfies $f(ga)=g f(a)$, for all $g \in G, ~a\in A$.
%\begin{example}
%Let $(A,\mu)$ be an associative algebra equipped with an action of finite group $G$ \cite{MY19} and $R: A \to A$ is a linear map satisfying
%\begin{enumerate}
%\item $\mu(R(a) , R(b)) = R \big( \mu(a , R(b)) + \mu(R(a) , b) \big), ~~~ \text{ for all } a, b \in A.$
%\item $R(ga)=g R(a),~~~ \text{ for all } a \in A,~~ g\in G.$
%\end{enumerate}
%Then $(A, \mu, \prec, \succ)$ is a dendriform algebra with a natural action of $G$ on $A$, where $\prec, \succ$ is defined as
%\begin{align*}
%a \prec b = \mu(a,  R (b)) ~~~~~~~~ \mathrm{ and } ~~~~~~~~ a \succ b = \mu(R(a), b).
%\end{align*}
%\end{example}
The following is an equivalent formulation of the above definition.
\begin{prop}
Let $G$ be a finite group and $A$ be a dendrifrom algebra. Then $G$ acts on $A$ if and only if there exists a group homomorphism 
$G \rightarrow \mathrm{Iso}_{\mathrm{Dend}} (A, A)$ from the group $G$ to the group $ \mathrm{Iso}_{\mathrm{Dend}} (A, A)$ of dendriform algebra isomorphisms on $A$.
\end{prop}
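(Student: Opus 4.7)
The plan is to establish the standard equivalence between an action of $G$ and a homomorphism into the automorphism group, treating the two implications separately as an unwinding of the axioms in Definition \ref{definition-group-action}.

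For the forward direction, I would start with an action $\psi : G \times A \to A$ and define the candidate homomorphism $\Phi : G \to \mathrm{Iso}_{\mathrm{Dend}}(A,A)$ by $\Phi(g) := \psi_g$. The task then splits into two verifications. First, that each $\psi_g$ actually lies in $\mathrm{Iso}_{\mathrm{Dend}}(A,A)$: linearity is condition (1), compatibility with $\prec$ and $\succ$ is condition (4), and invertibility follows from conditions (2)--(3), which together give $\psi_g \circ \psi_{g^{-1}} = \psi_{g^{-1}} \circ \psi_g = \psi_e = \mathrm{id}_A$, so $\psi_{g^{-1}}$ is a two-sided inverse. Second, that $\Phi$ respects the group structure: $\Phi(e) = \mathrm{id}_A$ by (2) and $\Phi(gh) = \Phi(g) \circ \Phi(h)$ by (3), applied pointwise on $A$.

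For the converse, I would start with a group homomorphism $\Phi : G \to \mathrm{Iso}_{\mathrm{Dend}}(A,A)$ and define $\psi : G \times A \to A$ by $\psi(g,a) := \Phi(g)(a)$. Conditions (1) and (4) of Definition \ref{definition-group-action} follow from the fact that $\Phi(g)$ is a dendriform algebra morphism (hence linear and compatible with $\prec, \succ$). Condition (2) follows from $\Phi(e) = \mathrm{id}_A$ and condition (3) from $\Phi(gh) = \Phi(g) \circ \Phi(h)$, both being part of the homomorphism property. Finally, I would note that these two constructions are mutually inverse, which is immediate from the respective definitions $\Phi(g) = \psi_g$ and $\psi(g,a) = \Phi(g)(a)$.

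The proof is essentially a bookkeeping exercise with no substantive obstacle; the only thing to be careful about is the observation that invertibility of each $\psi_g$ in the forward direction is not assumed in Definition \ref{definition-group-action} but follows automatically, so that landing in $\mathrm{Iso}_{\mathrm{Dend}}(A,A)$ (rather than merely in the monoid of dendriform endomorphisms) is justified.
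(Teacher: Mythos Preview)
Your proposal is correct and complete; the paper in fact states this proposition without proof, merely remarking that it is ``an equivalent formulation of the above definition,'' so your argument supplies exactly the routine verification the authors chose to omit.
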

%\begin{remark}
%Let ${\mathbb K}[G]$ be the group ring. If a dendriform algebra $A$ is equipped with an action of $G$ then $A$ may be viewed as a ${\mathbb K}[G]$-module.
%\end{remark}
Let $A$ be a dendriform algebra equipped with an action of  $G$. Given a subgroup $H \subset G,$ the $H$-fixed point set $A^H$ is defined by
$$A^H = \{a \in A |~ ha = a, ~\forall h \in H\}.$$ 
\begin{lemma}
Let $(A,\prec, \succ)$ be a dendriform algebra equipped with an action of $G$. For every subgroup $H \subset G,$ $A^H$ is a dendriform subalgebra of $A$.
\end{lemma}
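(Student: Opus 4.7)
The plan is to verify directly from Definition \ref{definition-group-action} that $A^H$ is closed under the vector space operations and under the two dendriform products $\prec, \succ$, and then to observe that the three dendriform identities are inherited from $A$ for free.

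First I would check that $A^H$ is a linear subspace of $A$. Given $a, b \in A^H$ and a scalar $\alpha \in \mathbb{K}$, for every $h \in H$ the map $\psi_h : A \to A$ is linear by condition (1) of Definition \ref{definition-group-action}, so $h(a+b) = ha + hb = a + b$ and $h(\alpha a) = \alpha(ha) = \alpha a$. Hence $a+b, \alpha a \in A^H$.

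Next I would verify closure under the two binary operations. If $a, b \in A^H$, then for each $h \in H$, condition (4) of Definition \ref{definition-group-action} gives
\begin{align*}
h(a \prec b) &= ha \prec hb = a \prec b, \\
h(a \succ b) &= ha \succ hb = a \succ b,
\end{align*}
so both $a \prec b$ and $a \succ b$ lie in $A^H$. Thus $\prec$ and $\succ$ restrict to bilinear maps $A^H \times A^H \to A^H$.

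Finally, the three dendriform identities (\ref{dend-eqn-1})--(\ref{dend-eqn-3}) hold for all elements of $A$, and in particular for all elements of $A^H \subset A$. Therefore $(A^H, \prec|_{A^H}, \succ|_{A^H})$ is a dendriform algebra, and by the closure above it is a dendriform subalgebra of $A$. The argument is essentially routine; there is no real obstacle since everything follows from the linearity of the $G$-action and its compatibility with the products, but the one point worth stating carefully is the use of condition (1) to get that $A^H$ is a subspace (rather than merely a set) before invoking condition (4) for closure under $\prec, \succ$.
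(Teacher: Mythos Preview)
Your proof is correct and takes essentially the same approach as the paper: both verify directly that $A^H$ is closed under $\prec$ and $\succ$ using condition (4) of Definition \ref{definition-group-action}. Your version is in fact more thorough, since you also explicitly check that $A^H$ is a linear subspace (via condition (1)) and that the dendriform identities are inherited, whereas the paper's proof takes these points as evident and only writes out the closure under the two products.
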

\begin{proof}
To prove $A^H$ is a dendriform subalgebra, it is enough to show $A^H$ is closed under the bilinear maps $\prec$ and $\succ$. Let $a, b \in A^H$ and $h\in H$. Then we have
\begin{align*}
 h(a \prec b)= ha \prec hb=a \prec b ~~~ \text{ and } ~~~ h(a \succ b)= ha \succ hb=a \succ b.
\end{align*}
This shows that $a \prec b$ and $a \succ b\in A^H$. Therefore, $A^H$ is a dendriform subalgebra.
\end{proof}

%\begin{prop}
%Considering dendriform algebra $A$ as a representation over itself, maps $\pi_A, \theta_1, \theta_2$ respects the group action, in the sense that,
%$$\pi_A ([r]; ga, gb)=g \pi_A ([r]; a, b),~~~ \theta_i ([r]; ga, gm)=g \theta_i ([r]; a, m)~~~\text{for}~i=1, 2.$$
%\end{prop}
%\begin{proof}
%Follows from the definitions of $\pi_A, \theta_1, \theta_2.$
%\end{proof}

\section{Equivariant cohomology of dendriform algebras}
In this section, we shall consider appropriate cohomology for equivariant dendriform algebras. We give two equivalent formulations of the cohomology, one in $\mathrm{Rep}(G)$ following \cite{Das} and another following Bredon's cohomology for topological $G$-spaces \cite{bredon67}. 

%Let $C_n$ be the set of first $n$ natural numbers. Since we will treat the elements of $C_n$ as certain symbols, we denote the elements of $C_n$ by $C_n = \{ [1], [2], \ldots, [n] \}.$ There are maps $R_0 (m; \overbrace{1, \ldots, 1, \underbrace{n}_{i\text{-th place}}, 1, \ldots, 1}^{m}) : C_{m+n-1} \rightarrow C_m$ and $R_i (m; \overbrace{1, \ldots, 1, \underbrace{n}_{i\text{-th place}}, 1, \ldots, 1}^{m}) : C_{m+n-1} \rightarrow \mathbb{K}[C_n]$ defined by

%\begin{align*} R_0 (m; 1, \ldots, 1, n, 1, \ldots, 1) ([r]) ~=~
%\begin{cases} [r] ~~~ &\text{ if } ~~ r \leq i-1 \\ [i] ~~~ &\text{ if } i \leq r \leq i +n -1 \\
%[r -n + 1] ~~~ &\text{ if } i +n \leq r \leq m+n -1 \end{cases}
%\end{align*}
%We also define maps $R_i (m; \overbrace{1, \ldots, 1, \underbrace{n}_{i\text{-th place}}, 1, \ldots, 1}^{m}) : C_{m+n-1} \rightarrow \mathbb{K}[C_n]$ by
%\begin{align*} R_i (m; 1, \ldots, 1, n, 1, \ldots, 1) ([r]) ~=~
%\begin{cases} [1] + [2] + \cdots + [n] ~~~ &\text{ if } ~~ r \leq i-1 \\ [r - (i-1)] ~~~ &\text{ if } i \leq r \leq i +n -1 \\
%[1]+ [2] + \cdots + [n] ~~~ &\text{ if } i +n \leq r \leq m+n -1. \end{cases}
%\end{align*}

Let $(V, \varrho_V)$ be an element in $\text{Rep}(G)$. For each $n \geq 1$, we define $C^n_{G} (V, V) = \text{Hom}_{\mathrm{Rep}(G)} (\mathbb{K}[C_n] \otimes V^{\otimes n}, V )$, where the representation of $G$ on
$\mathbb{K}[C_n] \otimes V^{\otimes n}$ is given by $\mathrm{id}_{\mathbb{K}[C_n]} \otimes \varrho_{V^{\otimes n}}$. 

The following is a generalization of \cite[Proposition 2.2]{Das}.
\begin{prop}
The collection of vector spaces $\{ C^n_{G} (V, V) \}_{n \geq 1}$ together with partial compositions $\circ_i : C^m_{G} (V, V) \otimes C^n_{G} (V, V) \rightarrow C^{m+n-1}_{G} (V, V)$ defined by
\begin{align*}
&(f \circ_i g) ([r]; a_1, \ldots, a_{m+n-1}) \\
&= f \big(R_0 (m; 1, \ldots, n, \ldots, 1)[r];~ a_1, \ldots, a_{i-1}, \\& \hspace*{2cm} g (R_i (m; 1, \ldots, n, \ldots, 1)[r]; a_i, \ldots, a_{i+n-1}), a_{i+n}, \ldots, a_{m+n-1} \big)
\end{align*}
and the identity map $\mathrm{id}_V \in  C^1_{G} (V, V)$ as identity element forms an operad.
\end{prop}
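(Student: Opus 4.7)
The key observation is that this proposition generalizes \cite[Proposition 2.2]{Das}, which already establishes that the larger collection $\{\mathcal{O}(n)\}_{n \geq 1} = \{\mathrm{Hom}_{\mathbb{K}}(\mathbb{K}[C_n] \otimes V^{\otimes n}, V)\}_{n \geq 1}$ with the same partial compositions $\circ_i$ and identity element $\mathrm{id}_V$ forms a non-symmetric operad. Since $C^n_{G}(V,V) \subset \mathcal{O}(n)$ is a subspace (the subspace of $G$-equivariant multilinear maps), the plan is to reduce the claim to showing that the equivariant subspaces are closed under the structural operations, and then invoke the non-equivariant result for the operad axioms.

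The proof will therefore split into three steps. First, I would observe that $\mathrm{id}_V : V \to V$ is $G$-equivariant by the definition of a representation, so $\mathrm{id}_V \in C^1_G(V,V)$. Second, I would verify the crucial closure property: if $f \in C^m_G(V,V)$ and $g \in C^n_G(V,V)$, then $f \circ_i g \in C^{m+n-1}_G(V,V)$. For this, recall that the $G$-action on $\mathbb{K}[C_k] \otimes V^{\otimes k}$ is $\mathrm{id}_{\mathbb{K}[C_k]} \otimes \varrho_{V^{\otimes k}}$, so $G$ fixes the index $[r]$ and only acts on the $V$-tensors. Given $h \in G$, one computes
\begin{align*}
&(f \circ_i g)\bigl([r];\, \varrho_V(h,a_1), \ldots, \varrho_V(h,a_{m+n-1})\bigr) \\
&= f\bigl(R_0[r];\, \varrho_V(h,a_1), \ldots, g(R_i[r]; \varrho_V(h,a_i), \ldots, \varrho_V(h,a_{i+n-1})), \ldots, \varrho_V(h,a_{m+n-1})\bigr),
\end{align*}
then applies the equivariance of $g$ to pull $\varrho_V(h,-)$ outside the inner slot, and subsequently applies the equivariance of $f$ to pull it out of the outer expression, producing $\varrho_V(h, (f \circ_i g)([r]; a_1, \ldots, a_{m+n-1}))$.

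Third, having checked closure, the associativity axiom
\[
(f \circ_i g) \circ_j h = \begin{cases} (f \circ_j h) \circ_{i + p - 1} g & \text{if } j < i, \\ f \circ_i (g \circ_{j-i+1} h) & \text{if } i \leq j < i + n, \\ (f \circ_{j-n+1} h) \circ_i g & \text{if } j \geq i + n, \end{cases}
\]
and the unital axioms for $\mathrm{id}_V$ are inherited automatically because the corresponding identities hold in the ambient operad $\{\mathcal{O}(n)\}_{n \geq 1}$ by \cite[Proposition 2.2]{Das}, and both sides of each identity lie in the equivariant subspace by step two.

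The main (only) obstacle is purely notational: keeping track of the index-substitution maps $R_0$ and $R_i$ while verifying the equivariance computation, since $G$ acts trivially on the $\mathbb{K}[C_k]$ factors but nontrivially on every $V$-slot. Once one sees that the $G$-action commutes past both $f$ and $g$ slot-by-slot, the rest is bookkeeping that is already done in \cite{Das}.
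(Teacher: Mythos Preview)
Your proposal is correct and matches the paper's approach: the paper states the proposition as a direct generalization of \cite[Proposition 2.2]{Das} without giving an explicit proof, implicitly relying on exactly the sub-operad argument you outline (closure of the equivariant subspaces under $\circ_i$ and $\mathrm{id}_V$, with the operad axioms inherited from the ambient non-equivariant operad).
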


Note that a multiplication $\pi \in C^2_{G} (V,V) = \text{Hom}_{\mathrm{Rep}(G)} (\mathbb{K}[C_2] \otimes V^{\otimes 2}, V ) $ on this operad is same as an equivariant dendriform algebra structure on $(V, \varrho_V)$. We define the cohomology of an equivariant dendriform algebra as the cohomology induced from the corresponding multiplication in the above operad.

More precisely, the coboundary map $\delta :  C^n_{G} (V, V) \rightarrow  C^{n+1}_{G} (V, V)$ is given by
\begin{align*}
(\delta f)_{[r]} =~& \pi_{R_0 (2;1,n)[r]} \circ (\text{id}_V \otimes f_{R_2 (2;1,n)[r]}) \\
~&+ \sum_{i=1}^n (-1)^i~ f_{R_0 (n;1, \ldots, 2, \ldots,1)[r]} \circ (\text{id}_{V^{\otimes i-1}} \otimes \pi_{R_i (n;1, \ldots, 2, \ldots,1)[r]} \otimes \text{id}_{V^{\otimes n-i}} ) \\
~&+ (-1)^{n+1}~ \pi_{R_0 (2, n, 1)[r]} \circ ( f_{R_1 (2;n, 1)[r]} \otimes \text{id}_V ),
\end{align*}
for $f \in C^n_G (V, V)$ and $[r] \in C_{n+1}.$
Then $\delta^2 = 0$. The cohomology of the cochain complex $\{ C^*_{G} (V, V), \delta \}$ is called the cohomology of the equivariant dendriform algebra. We denote the $n$-th  equivariant cohomology of dendriform algebra $V$ in $\mathrm{Rep}(G)$ as $H^n_G(V, V)$. 
It follows that the graded space of cohomology $ \bigoplus_n H^n_G (V, V)$ carries a Gerstenhaber structure.
One may also define representations and cohomology with coefficients in representation for dendriform algebras in $\mathrm{Rep}(G)$. They are generalizations of the results of Section \ref{sec-2}.

\medskip

In the following, we relate the above cohomology with the Hochschild cohomology of equivariant associative algebras \cite{MY19}.

Let $(V, \varrho_V) \in \text{Rep}(G)$. For any $\phi \in  \text{Hom}_{\text{Rep}(G)} (V^{\otimes m }, V)$ and  $ \psi \in \text{Hom}_{\text{Rep}(G)} (V^{\otimes n}, V)$, we define $\phi \circ_i \phi \in  \text{Hom}_{\text{Rep}(G)} (V^{\otimes m+n-1}, V)$ by
\begin{align*}
(\phi \circ_i \psi) ( v_1, \ldots, v_{m+n-1}) = \phi ( v_1, \ldots, v_{i-1}, \psi (v_i, \ldots, v_{i+n-1}), v_{i+n}, \ldots, v_{m+n-1}).
\end{align*}
There is a degree $-1$ graded Lie bracket on $\bigoplus_n  \text{Hom}_{\text{Rep}(G)} (V^{\otimes n}, V)$  given by
\begin{align*}
[\phi, \psi] = \sum_{i=1}^m (-1)^{(i-1)(n-1)} \phi \circ_i \psi - (-1)^{(m-1)(n-1)} \sum_{i=1}^n (-1)^{(i-1)(m-1)} \psi \circ_i \phi.
\end{align*}
This is the Gerstenhaber bracket on the space of multilinear maps on $(V, \varrho_V)$.

\begin{remark}
The collection of vector spaces $\{ \text{Hom}_{\text{Rep}(G)} (V^{\otimes n}, V)\}_{n \geq 1}$ together with the $\circ_i$-products forms an operad. When $G = \{ e \}$, the group consisting of only one element, this operad reduces to the endomorphism operad on $V$ \cite{lod-val-book}.
\end{remark}

Let $(V, \mu)$ be an associative algebra in $\text{Rep}(G)$. For any $n \geq 0$, the $n$-th Hochschild  cochain group $C^n_{\mathrm{Hoch}} (V, V)$ is given by $C^0_{\mathrm{Hoch}}(V, V) = V^\varrho = \{ v \in V |~ \varrho (g, v ) = v, \forall g \in G \}$ and $C^n_{\mathrm{Hoch}} (V, V) = \mathrm{Hom}_{\mathrm{Rep}(G)} (V^{\otimes n}, V)$, for $n \geq 1$. The coboundary map $\delta : C^n_{\mathrm{Hoch}} (V, V) \rightarrow C^{n+1}_{\mathrm{Hoch}} (V, V)$ is given by
\begin{align}
&(\delta v) (w) = \mu (w, v) - \mu (v, w),~~~~ \text{ for } v \in V^\varrho,\\
&\delta f = \mu \circ (\mathrm{id}_V \otimes f) + \sum_{i=1}^n (-1)^i f \circ (\mathrm{id}_{V^{\otimes i-1}} \otimes f \otimes \mathrm{id}_{V^{\otimes n-i}}) + (-1)^{n+1} \mu \circ (f \otimes \mathrm{id}_V). \label{hoch-diff-e}
\end{align}
Then $\delta^2 = 0$. The cohomology of the complex $\{ C^*_{\mathrm{Hoch}}(V,V), \delta \}$ is called the Hochschild cohomology of the associative algebra $(V, \mu)$ in $\text{Rep}(G).$ We denote the cohomology by $H^n_{\mathrm{Hoch}}(V,V).$

\begin{remark}
Let $(V, \varrho_V)$ be an object in $\text{Rep}(G)$. A morphism $\mu \in \mathrm{Hom}_{\mathrm{Rep}(G)} (V^{\otimes 2}, V)$ defines an associative structure on $V$ if and only if $\mu $ satisfies $\mu \circ_1 \mu = \mu \circ_2 \mu$. In other words, $\mu$ is a multiplication on the operad $\{ \mathrm{Hom}_{\mathrm{Rep}(G)} (V^{\otimes n}, V ), \circ_i \}_{n \geq 1}$ in the sense of \cite{gers-voro}. Therefore, $\mu$ induces a differential 
\begin{align*}
\delta_\mu f = [ \mu , f ]
\end{align*}
on the graded space $\bigoplus_n \mathrm{Hom}_{\mathrm{Rep}(G)} (V^n, V)$. This differential is same as the Hochschild differential (\ref{hoch-diff-e}) up to a sign. Therefore, it follows from Gerstenhaber and Voronov \cite{gers-voro} that the Hochschild cohomology of an associative algebra in $\text{Rep}(G)$ carries a Gerstenhaber structure. See also \cite{G1}.
\end{remark}

\begin{thm}
The collection of maps $$S_n : \mathrm{Hom}_{\mathrm{Rep}(G)} (\mathbb{K}[C_n] \otimes V^{\otimes n} , V ) \rightarrow \mathrm{Hom}_{\mathrm{Rep}(G)} ( V^{\otimes n} , V ), ~f \mapsto f_{[1]} + \cdots + f_{[n]}, \text{ for } n \geq 1$$ is a morphism between operads.

Let $(V, \prec, \succ)$ be a dendriform algebra in $\mathrm{Rep}(G)$ with the corresponding associative algebra $(V, \prec + \succ)$. Then $\{S_n\}$ preserves corresponding multiplications. Hence, they induces a morphism $S_* : H^*_{G} (V, V) \rightarrow H^*_{\mathrm{Hoch}} (V, V)$ between cohomologies. In fact, $S_* $ is a morphism between Gerstenhaber algebras.
\end{thm}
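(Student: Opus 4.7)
The plan is to establish that $\{S_n\}_{n\geq 1}$ is a morphism of operads, to verify it sends $\pi_V$ to $\mu = \prec + \succ$, and then to invoke the general theory of operads with a multiplication (in the sense of Gerstenhaber--Voronov) to obtain the chain map and Gerstenhaber algebra statements at once.

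First, for the operad morphism claim, the identity condition $S_1(\mathrm{id}_V) = \mathrm{id}_V$ is immediate as $|C_1|=1$. For the compatibility $S_{m+n-1}(f \circ_i g) = S_m(f) \circ_i S_n(g)$, I would fix $f, g, i$ and expand both sides. The right-hand side evaluated at $(a_1, \ldots, a_{m+n-1})$ equals
\[
\sum_{k=1}^m \sum_{s=1}^n f\big([k];\, a_1, \ldots, a_{i-1}, g([s]; a_i, \ldots, a_{i+n-1}), a_{i+n}, \ldots, a_{m+n-1}\big),
\]
using linearity of $f$ in the slot holding $g(\ldots)$. The left-hand side is $\sum_{r=1}^{m+n-1} (f \circ_i g)([r]; \ldots)$, which I would split according to the three piecewise regimes of $R_0$ and $R_i$. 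For $r \leq i-1$ and for $r \geq i+n$, the map $R_0$ sends $r$ to each $[k]$ with $k \neq i$ exactly once while $R_i[r] = [1] + \cdots + [n]$, contributing $\sum_{s=1}^n f([k]; \ldots, g([s]; \ldots), \ldots)$ in each of those cases. For $i \leq r \leq i+n-1$, the value $R_0[r] = [i]$ is constant while $R_i[r] = [r-i+1]$ sweeps once through $[1], \ldots, [n]$, contributing $\sum_{s=1}^n f([i]; \ldots, g([s]; \ldots), \ldots)$. Reassembling the three regimes recovers the displayed double sum. This combinatorial bookkeeping is the main obstacle of the proof.

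Next, directly from the definition of $\pi_V$ one has $S_2(\pi_V)(a,b) = \pi_V([1]; a, b) + \pi_V([2]; a, b) = a \prec b + a \succ b = \mu(a, b)$. Inspecting the equivariant dendriform coboundary, its three parts unpack precisely as $\pi_V \circ_2 f$, $\sum_{i=1}^n(-1)^i f \circ_i \pi_V$, and $(-1)^{n+1} \pi_V \circ_1 f$, while the Hochschild coboundary (\ref{hoch-diff-e}) has the identical shape with $\mu$ replacing $\pi_V$. Applying $S_{n+1}$ to the first expression and using the operad compatibility established above together with $S_2(\pi_V) = \mu$ yields $S_{n+1}(\delta f) = \delta_{\mathrm{Hoch}}(S_n(f))$, so $S_*$ descends to a well-defined map on cohomology.

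Finally, by the Gerstenhaber--Voronov construction already invoked in the paper, the Gerstenhaber bracket and the cup product on the cohomology of an operad with multiplication are defined purely in terms of the partial compositions $\circ_i$, the identity, and the multiplication element. Since $\{S_n\}$ preserves all three structures on the nose, it is automatically compatible with both the cup product $f \smile g$ and the circle-product bracket $[f,g]$ at the cochain level; passing to cohomology then promotes $S_*$ to a morphism of Gerstenhaber algebras.
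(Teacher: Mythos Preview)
Your proof is correct. The paper actually states this theorem without proof, so there is no argument to compare against; your verification that $\{S_n\}$ respects the partial compositions via the three-regime analysis of $R_0$ and $R_i$, together with the observation that both differentials are built from $\circ_i$ and the respective multiplications, supplies exactly what is needed, and the appeal to Gerstenhaber--Voronov for the Gerstenhaber algebra morphism is the standard and appropriate conclusion.
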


The following is an equivalent formulation of equivariant cohomology of dendriform algebras along the line of Bredon cohomology for a $G$-space \cite{bredon67, elm}.

Let $G$ be a finite group. Recall that the category of canonical orbits of $G$, denoted by $O_G,$ is a category whose objects are left cosets $G/H$, as $H$ runs over all subgroups of $G$. Note that the group $G$ acts on the set $G/H$ by left translation. A morphism from $G/H$ to $G/ K$ is a $G$-map. Recall that such a morphism determines and is determined by a subconjugacy relation $g^{-1}Hg\subseteq K$ and is given by $\hat{g}(eH)=gK$. We denote this morphism by $\hat{g}$ \cite{bredon67}.

We denote the category of dendriform algebras as \textbf{Dend}. 
\begin{defn}
An $O_G$-dendriform algebra is a contravariant functor
$$\mathcal{A} : O_G \to \textbf{Dend}.$$
\end{defn}
\begin{exam}
Let $A$ be a dendriform algebra equipped with an action of $G$.  Then there is a contravariant functor
\begin{align*}
\Phi_A : O_G \to \textbf{Dend}
\end{align*}
defined by $\Phi_A (G/H)=A^H$ and for a morphism $\hat{g}:G/H\to G/K$ corresponding to the subconjugacy relation $g^{-1}Hg\subseteq K$,
$$\Phi_A(\hat{g})=\psi_g: A^K \to A^H.$$
Thus, $\Phi_A$ is an $O_G$-dendriform algebra, which will be referred to as the $O_G$-dendriform algebra associated to $A$.
\end{exam}
\begin{defn}
An $O_G$-module is a contravariant functor
$M: O_G \to \textbf{Mod}$,
where \textbf{Mod} denotes the category of $\mathbb{K}$-modules.
\end{defn}

A representation of an $O_G$-dendriform algebra $\mathcal{A}$ is an $O_G$-module $M$ together with the following natural transformations
\begin{align*}
\alpha_l, \alpha_r : \mathcal{A}\otimes M \to M,\\
\beta_l, \beta_r : M\otimes \mathcal{A} \to M,
\end{align*}
such that for all $G/H \in \text{Ob}(O_G)$, $M(G/H)$ is a non-equivariant representation of $\mathcal{A}(G/H)$ with associated natural transformations $\alpha_l, \alpha_r, \beta_l, \beta_r.$
\begin{remark}
From the above definition of the representation in an equivariant setting we have the following relations coming from the naturality of $\alpha_l,\alpha_r,\beta_l$ and $\beta_r$
\begin{align}
\begin{cases}
\alpha_i(G/H)\circ (\mathcal{A}(\hat{g})\otimes M(\hat{g}))=M(\hat{g})\circ \alpha_i(G/K),\\
\beta_i(G/H)\circ (M(\hat{g})\otimes \mathcal{A}(\hat{g}))=M(\hat{g})\circ\beta_i(G/K)),
\end{cases}
\end{align}
for $i=l,r$.
\end{remark}

We set $$S^n(\mathcal{A};M):=\bigoplus_{H\leq G}C^n_{\text{dend}}(\mathcal{A}(G/H),M(G/H)), ~ \text{ for } n \geq 1$$
and define $\delta^n: S^n(\mathcal{A};M)\to S^{n+1}(\mathcal{A};M)$ by $\delta^n:=\bigoplus_{H\leq G}\delta^n_H$,
where $\delta^H_n: C^n_{\text{dend}}(\mathcal{A}(G/H),M(G/H))\to C^{n+1}_{\text{dend}}(\mathcal{A}(G/H), M(G/H))$ is the non-equivariant coboundary map given by (\ref{non-eq-cob}).

Clearly, $\lbrace S^\sharp(\mathcal{A};M),\delta\rbrace$ is a cochain complex. Throughout this paper, we take the $O_G$-dendriform algebra $\mathcal{A}$ as $\Phi_A$ and consider the cohomology of $\Phi_A$ with coefficients in $\Phi_A$. We define a subcomplex of this cochain complex as follows:
\begin{defn}
A cochain $c=\lbrace c_H \mid H\leq G\rbrace$ is said to be invariant if for every morphism $\hat{g}: G/H\to G/K$ corresponding to a subconjugacy relation $g^{-1}Hg\subseteq K$, following relation holds
\begin{align}\label{inv-defn}
c_H\circ(\mathrm{id} \otimes\mathcal{A}(\hat{g})^{\otimes n})=\mathcal{A}(\hat{g})\circ c_K.
\end{align}
As $\Phi_A$ is an $O_G$-dendrifrom algebra corresponding to the dendriform algebra $A$, the identity (\ref{inv-defn}) is same as
\begin{align*}
c_H\circ(\mathrm{id} \otimes\psi_g^{\otimes n})=\psi_g\circ c_K.
\end{align*}
\end{defn}
\begin{lemma}
The set of all invariant $n$-cochains $S_G^n(\Phi_A;\Phi_A)$ is a subgroup of $S^n(\Phi_A;\Phi_A)$. In other words, if $c=\lbrace c_H\rbrace\in S^n_G(\Phi_A;\Phi_A)$ is an invariant cochain then $\delta^n_H(c)=\lbrace \delta^n_H(c_H)\rbrace\in S^{n+1}_G (\Phi_A;\Phi_A)$ is an invariant $(n+1)$-cochain.
\end{lemma}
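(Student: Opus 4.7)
The plan is to split the statement into the two assertions it actually contains: first, that $S^n_G(\Phi_A;\Phi_A)\subseteq S^n(\Phi_A;\Phi_A)$ is a subgroup (in fact a $\mathbb{K}$-subspace); and second, the real content, that the coboundary preserves invariance.

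The subgroup part is immediate from the shape of the invariance condition. Given invariant cochains $c=\{c_H\}$ and $c'=\{c'_H\}$ and any morphism $\hat{g}:G/H\to G/K$ with $g^{-1}Hg\subseteq K$, both sides of $(\,\cdot\,)\circ(\mathrm{id}\otimes\psi_g^{\otimes n})=\psi_g\circ(\,\cdot\,)$ are $\mathbb{K}$-linear in the cochain, so $\lambda c+\mu c'$ inherits the invariance. In particular $S^n_G(\Phi_A;\Phi_A)$ is closed under addition and negation.

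For the main claim, I would show that for every $\hat{g}:G/H\to G/K$,
\begin{equation*}
\delta^n_H(c_H)\circ(\mathrm{id}\otimes\psi_g^{\otimes(n+1)})=\psi_g\circ\delta^n_K(c_K),
\end{equation*}
by verifying the analogous identity summand by summand in the decomposition $\delta^n=\sum_{i=0}^{n+1}(-1)^i\delta_i$. The two inputs I would use are: (i) $\psi_g:A^K\to A^H$ is a dendriform algebra morphism (it is the restriction of $\psi_g:A\to A$, which is dendriform by Definition \ref{definition-group-action}(4), and restricts by the fixed-point lemma just proved), so $\psi_g\circ\pi_{A^K}=\pi_{A^H}\circ(\psi_g\otimes\psi_g)$ as maps on $\mathbb{K}[C_2]\otimes (A^K)^{\otimes 2}$, and the same holds for the coefficient maps $\theta_1,\theta_2$ (which here coincide with $\pi$ as we are using the adjoint representation $\Phi_A$); (ii) the combinatorial maps $R_0(\cdots)[r]$ and $R_i(\cdots)[r]$ depend only on the indices $m,n,i$ and on $[r]$, not on the underlying algebra, so they commute trivially with restriction along $\psi_g$. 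For $1\le i\le n$, substituting $\psi_g(a_1),\dots,\psi_g(a_{n+1})$ into $(\delta_i c_H)$ pulls $\psi_g$ inside $\pi_A$ by (i), then across $c_H$ by the invariance of $c$, and outside, producing exactly $\psi_g\bigl((\delta_i c_K)([r];a_1,\dots,a_{n+1})\bigr)$. For $i=0$ and $i=n+1$ the same manipulation goes through with $\theta_1,\theta_2$ in place of $\pi_A$. Summing with the signs $(-1)^i$ yields the required identity for $\delta^n(c)$, so $\delta^n(c)=\{\delta^n_H(c_H)\}\in S^{n+1}_G(\Phi_A;\Phi_A)$.

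I do not expect any genuine obstacle; the only thing that requires care is the bookkeeping of the indexing maps $R_0,R_i$ and the fact that $\mathrm{id}_{\mathbb{K}[C_n]}$ on the combinatorial factor passes through $\psi_g^{\otimes n}$ unchanged. Once those are tracked, the argument is just "$\psi_g$ commutes with $\pi_A,\theta_1,\theta_2$, and with $c$ by hypothesis, therefore $\psi_g$ commutes with every composite that builds $\delta^n c$."
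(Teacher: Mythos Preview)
Your proposal is correct and follows essentially the same approach as the paper: expand $\delta^n_H(c_H)$ applied to $(\mathrm{id}\otimes\psi_g^{\otimes(n+1)})$, use that $\psi_g$ intertwines $\pi_A$ (and hence $\theta_1,\theta_2$ in the adjoint case) together with the invariance hypothesis on $c$, and collapse each summand to the corresponding summand of $\psi_g\circ\delta^n_K(c_K)$. The paper carries this out as an explicit line-by-line computation on elements, whereas you phrase it more conceptually (``$\psi_g$ commutes with the building blocks, hence with $\delta^n c$''), but the content is identical; your additional remark that the subgroup assertion is just linearity of the invariance condition is a fair point the paper leaves implicit.
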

\begin{proof}
Suppose $c=\lbrace c_H\rbrace\in S^n_G(\Phi_A;\Phi_A)$ is an invariant cochain and $\hat{g}: G/H\to G/K$ is a morphism in $O_G$, which corresponds to the subconjugacy relation $g^{-1}Hg\subseteq K$. Thus, from the naturality of functors, we have
$$c_H\circ(\mathrm{id} \otimes\psi_g^{\otimes n})=\psi_g\circ c_K.$$
Now,
\begin{align*}
&\delta^{n}_H(c_H)(\mathrm{id} \otimes\psi_g^{\otimes n+1})([r];~a_1,\ldots,a_{n+1})\\
&=\delta^n_Hc_H([r];\psi_g(a_1),\ldots,\psi_g(a_{n+1}))\\
&=\delta^n_Hc_H([r];g a_1,\ldots,g a_{n+1})\\
      &= \theta_1 \big( R_0 (2;1,n) [r]; ~g a_1, c_H (R_2 (2;1,n)[r]; g a_2, \ldots, g a_{n+1})   \big)\\
      &~+ \sum_{i=1}^n  (-1)^i~ c_H \big(  R_0 (n; 1, \ldots, 2, \ldots, 1)[r];
       g a_1, \ldots, g a_{i-1}, \pi_A (R_i (1, \ldots, 2, \ldots, 1)[r]; g a_i, g a_{i+1}), g a_{i+2}, \ldots, g a_{n+1}   \big)\\
      &~+(-1)^{n+1} ~\theta_2 \big( R_0 (2; n, 1) [r];~ c_H (R_1 (2;n,1)[r]; g a_1, \ldots, g a_n), g a_{n+1}   \big)\\
    &= \theta_1 \big( R_0 (2;1,n) [r]; ~g a_1, c_H\circ(\mathrm{id} \otimes\psi_g^{\otimes n}) (R_2 (2;1,n)[r]; a_2, \ldots, a_{n+1})   \big)\\
      &~+ \sum_{i=1}^n  (-1)^i~ c_H \big(  R_0 (n; 1, \ldots, 2, \ldots, 1)[r];
       g a_1, \ldots, g a_{i-1}, g \pi_A (R_i (1, \ldots, 2, \ldots, 1)[r]; a_i, a_{i+1}), g a_{i+2}, \ldots, g a_{n+1}   \big)\\
      &~+(-1)^{n+1} ~\theta_2 \big( R_0 (2; n, 1) [r];~ c_H\circ(\mathrm{id} \otimes\psi_g^{\otimes n}) (R_1 (2;n,1)[r];  a_1, \ldots, a_n), g a_{n+1}   \big)\\
      &= \theta_1 \big( R_0 (2;1,n) [r]; ~g a_1, (\psi_g\circ c_K) (R_2 (2;1,n)[r]; a_2, \ldots, a_{n+1})   \big)\\
      &~+ \sum_{i=1}^n  (-1)^i~ c_H\circ(\mathrm{id} \otimes\psi_g^{\otimes n}) \big(  R_0 (n; 1, \ldots, 2, \ldots, 1)[r];
        a_1, \ldots,  a_{i-1},  \pi_A (R_i (1, \ldots, 2, \ldots, 1)[r]; a_i, a_{i+1}),  a_{i+2}, \ldots,  a_{n+1}   \big)\\
      &~+(-1)^{n+1} ~ g \theta_2 \big( R_0 (2; n, 1) [r];~ (\psi_g\circ c_K) (R_1 (2;n,1)[r];  a_1, \ldots, a_n), a_{n+1}   \big)\\
            &= \psi_g \circ \theta_1 \big( R_0 (2;1,n) [r]; ~ a_1, c_K(R_2 (2;1,n)[r]; a_2, \ldots, a_{n+1})   \big)\\
      &~+ \sum_{i=1}^n  (-1)^i~ (\psi_g\circ c_K) \big(  R_0 (n; 1, \ldots, 2, \ldots, 1)[r];
        a_1, \ldots,  a_{i-1},  \pi_A (R_i (1, \ldots, 2, \ldots, 1)[r]; a_i, a_{i+1}),  a_{i+2}, \ldots,  a_{n+1}   \big)\\
      &~+(-1)^{n+1} ~ \psi_g \circ \theta_2 \big( R_0 (2; n, 1) [r];~ c_K (R_1 (2;n,1)[r];  a_1, \ldots, a_n), a_{n+1}   \big)\\
       %     &= \psi_g \big(\theta_1 \big( R_0 (2;1,n) [r]; ~ a_1, c_K(R_2 (2;1,n)[r]; a_2, \ldots, a_{n+1})   \big)\\
%      &~+ \sum_{i=1}^n  (-1)^i~c_K \big(  R_0 (n; 1, \ldots, 2, \ldots, 1)[r];   a_1, \ldots,  a_{i-1},  \pi_A (R_i (1, \ldots, 2, \ldots, 1)[r]; a_i, a_{i+1}),  a_{i+2}, \ldots,  a_{n+1}   \big)\\
%      &~+(-1)^{n+1} ~ \theta_2 \big( R_0 (2; n, 1) [r];~ c_K (R_1 (2;n,1)[r];  a_1, \ldots, a_n), a_{n+1}   \big)\big)\\
     &= \psi_g\circ \delta^n_Kc_K([r];~a_1,a_2,\ldots,a_{n+1}).
\end{align*}
Therefore, $\delta^n_H(c_H)\circ (\mathrm{id} \otimes\psi_g^{\otimes {n+1}})=\psi_g\circ \delta^n_Kc_K$ and we have
%\begin{align*}
$\lbrace\delta_H(c_H)\rbrace \in S_G^{n+1} (\Phi_A;\Phi_A).$
%\end{align*}
\end{proof}

Thus, we obtain a cochain subcomplex
$\lbrace S_G^\sharp(\Phi_A;\Phi_A),\delta\rbrace.$
The homology groups $H_*(S^\sharp_G(\Phi_A;\Phi_A))$ of this cochain complex is called the cohomology of the $O_G$-dendriform algebra $\Phi_A$ associated with $A$ and coefficients in $\Phi_A$.

%\section{Equivariant abelian extension}
\begin{thm}
Let $A$ be a dendriform algebra with a given action of $G$. For the $O_G$-dendriform algebra $\Phi_A$, we have an isomorphism
$$H^n_{G}(A, A) \cong H_n(S^\sharp (\Phi_A, \Phi_A)).$$
\end{thm}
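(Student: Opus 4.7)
The plan is to construct explicit inverse chain maps between $C^\ast_G(A,A)$ and $S^\ast_G(\Phi_A,\Phi_A)$. In one direction, for $f\in C^n_G(A,A)=\mathrm{Hom}_{\mathrm{Rep}(G)}(\mathbb{K}[C_n]\otimes A^{\otimes n},A)$, define
$$\Theta_n(f)=\{f_H\}_{H\leq G},\qquad f_H:=f|_{\mathbb{K}[C_n]\otimes (A^H)^{\otimes n}}.$$
First I would check that each $f_H$ actually lands in $A^H$: if $a_1,\dots,a_n\in A^H$ and $h\in H$, then $G$-equivariance of $f$ gives $\psi_h f([r];a_1,\dots,a_n)=f([r];\psi_h a_1,\dots,\psi_h a_n)=f([r];a_1,\dots,a_n)$, so $f_H\in C^n_{\mathrm{dend}}(A^H,A^H)$. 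Next I would verify the invariance relation of Definition \ref{inv-defn}: for a morphism $\hat g:G/H\to G/K$ with $g^{-1}Hg\subseteq K$ and $a_1,\dots,a_n\in A^K$, equivariance of $f$ yields
$$f_H([r];\psi_g a_1,\dots,\psi_g a_n)=\psi_g\,f([r];a_1,\dots,a_n)=\psi_g\,f_K([r];a_1,\dots,a_n),$$
which is precisely $f_H\circ(\mathrm{id}\otimes\psi_g^{\otimes n})=\psi_g\circ f_K$. Hence $\Theta_n(f)\in S^n_G(\Phi_A,\Phi_A)$.

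In the opposite direction, for $c=\{c_H\}\in S^n_G(\Phi_A,\Phi_A)$, set $\Psi_n(c):=c_{\{e\}}$. Applying the invariance relation to the morphism $\hat g:G/\{e\}\to G/\{e\}$ (which is always defined since $g^{-1}\{e\}g=\{e\}\subseteq\{e\}$) gives $c_{\{e\}}\circ(\mathrm{id}\otimes\psi_g^{\otimes n})=\psi_g\circ c_{\{e\}}$, so $c_{\{e\}}\in C^n_G(A,A)$. To see that $\Theta_n$ and $\Psi_n$ are mutual inverses, observe that $\Psi_n\Theta_n(f)=f|_{A^{\{e\}}}=f$, while for the other composition I would invoke the invariance relation applied to the morphism $\hat e:G/\{e\}\to G/H$ (from $\{e\}\subseteq H$), whose associated map $\Phi_A(\hat e)=\psi_e$ is the inclusion $A^H\hookrightarrow A$; this forces $c_{\{e\}}|_{(A^H)^{\otimes n}}=c_H$, so $\Theta_n\Psi_n(c)=\{c_{\{e\}}|_{(A^H)^{\otimes n}}\}=\{c_H\}=c$.

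It remains to verify that $\Theta_\ast$ is a chain map. Comparing the coboundary formula for $\delta:C^n_G(A,A)\to C^{n+1}_G(A,A)$ with the piecewise formula for $\delta^n_H$ on $C^n_{\mathrm{dend}}(A^H,A^H)$, both are built from the same combinatorial operations $R_0,R_i$ together with the dendriform structure maps $\pi_A,\theta_1,\theta_2$. Since $A^H$ is closed under $\prec$ and $\succ$ (by the lemma preceding this theorem) and the restriction $f_H$ is literally the restriction of $f$, each summand in $\delta f$ restricts to the corresponding summand in $\delta^n_H f_H$. Therefore $\Theta_{n+1}(\delta f)=\delta^{n+1}\Theta_n(f)$ as elements of $S^{n+1}_G(\Phi_A,\Phi_A)$. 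Passing to cohomology yields the claimed isomorphism $H^n_G(A,A)\cong H_n(S^\sharp(\Phi_A,\Phi_A))$.

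The only step that genuinely requires attention is the mutual inverse statement $\Theta_n\Psi_n=\mathrm{id}$, since it is not immediate that a single equivariant cochain $c_{\{e\}}$ recovers \emph{all} the $c_H$. The argument above shows this is forced by applying invariance to the trivial-subgroup inclusions $\{e\}\subseteq H$; everything else is bookkeeping with the coboundary formula.
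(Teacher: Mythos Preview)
Your argument is correct and follows essentially the same approach as the paper: define the forward map by restriction to fixed-point subalgebras and the inverse by picking out the component at the trivial subgroup. The paper simply asserts that the inverse works and that the maps are chain maps, whereas you supply the details (notably, that $f_H$ lands in $A^H$, that $c_{\{e\}}$ is $G$-equivariant via the morphisms $\hat g:G/\{e\}\to G/\{e\}$, and that $\Theta_n\Psi_n=\mathrm{id}$ via the inclusion morphism $\hat e:G/\{e\}\to G/H$); these are exactly the verifications one needs, and your treatment is if anything more complete than the original.
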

\begin{proof}
Let $\alpha \in C^n_{G} (A, A)= \text{Hom}_{\mathrm{Rep}(G)} (\mathbb{K}[C_n] \otimes A^{\otimes n}, A )$, where the representation of $G$ on $\mathbb{K}[C_n] \otimes A^{\otimes n}$ is given by $\mathrm{id}_{\mathbb{K}[C_n]} \otimes \varrho_{A^{\otimes n}}$. 
%In this proof we take $\varrho_{V^{\otimes n}}$ as $\psi^{\otimes n}_g$. 
Let us denote the restriction of $\alpha$ to $\mathbb{K}[C_n] \otimes (A^H)^{\otimes n}$ by $\alpha_H$. Note that $\lbrace \alpha_H \rbrace \in S^n_G (\Phi_A, \Phi_A)$.  For all $n \geq 1$, we define a map
\begin{align*}
F_n: C^n_{G} (A, A) \rightarrow S^n_G (\Phi_A, \Phi_A),~~~\alpha \mapsto \lbrace \alpha_H \rbrace.
\end{align*}
We claim that $F= \lbrace F_n \rbrace$ is a cochain map. Note that $(\delta \circ F_n)(\alpha)= \lbrace \delta_H (\alpha_H) \rbrace$. On the other hand, $(F_n \circ \delta)(\alpha) = F_n (\delta (\alpha)) = \lbrace \delta (\alpha)_H \rbrace = \lbrace \delta_H (\alpha_H) \rbrace$.
Thus, $F= \lbrace F_n \rbrace$ is a cochain map. Hence, $F$ induces a group homomorphism
\begin{align}
\bar {F_n} : H^n_{G}(A, A) \rightarrow H_n(S^\sharp (\Phi_A, \Phi_A)), \text{ for all }n \geq 1.
\end{align}

Now we define an inverse map $G = \lbrace G_n \rbrace$ as follows:
$$G_n : S^n_G (\Phi_A, \Phi_A) \rightarrow C^n_{G} (A, A),~~~\lbrace \alpha_H \rbrace \mapsto \alpha_{\lbrace e \rbrace}.$$
It is easy to verify that $G$ is an inverse of $F$. Therefore, $\bar {F_n}$ is an isomorphism for all $n$.
\end{proof}
\section{Equivariant one-parameter deformations of dendriform algebras}
In this section, we study formal one-parameter deformations of equivariant dendriform algebras. We show that such deformations are governed by the cohomology introduced in the previous section.

 \begin{defn} \label{deformation}A formal one-parameter deformation of $\Phi_A$ consists of a pair of natural transformations $(\mu^l_t, \mu^r_t)$ such that for every $G/H \in \text{Ob}(O_G)$, components of $\mu^l_t$ and $\mu^r_t$ are bilinear maps
 $$\mu^{l}_t (G/H), ~\mu^{r}_t(G/H) : \Phi_A(G/H)[[t]]\times \Phi_A(G/H)[[t]]\to \Phi_A(G/H)[[t]].$$
 This is same as
 \begin{align*}
 \mu^{H, l}_t, \mu^{H, r}_t : A^H[[t]]\times A^H[[t]]\to A^H[[t]],
 \end{align*}
 which can be expressed in the following form
 \begin{align*}
 &\mu^{H,l}_t(a,b)= \mu^{H,l}_0(a,b)+ \mu^{H,l}_1(a,b)t+ \mu^{H,l}_2(a,b)t^2+\cdots,\\
 &\mu^{H,r}_t(a,b)= \mu^{H,r}_0(a,b)+\mu^{H,r}_1(a,b)t+ \mu^{H,r}_2(a,b)t^2+\cdots,
 \end{align*}
 such that
 \begin{enumerate}
 \item $\mu^{H,l}_0(a,b)=a\prec b$ and $\mu^{H,r}_0(a,b)=a\succ b$, for $a,b\in A^H$, is the original multiplication of $A^H$.
\item For $i\geq 0$, $\mu^{H,l}_i$ and $\mu^{H,r}_i$ are  $\mathbb{K}$-bilinear maps $A^H\times A^H\to A^H$ and $\mu^{H,l}_t$ and $\mu^{H,r}_t$ satisfies the dendriform algebra relations. \label{dendalgebra relation in defor}
 \item For every morphisms $\hat{g}:G/H\to G/K$ and $g\in G$ satisfying $g^{-1}Hg\subseteq K$, the following relations hold \label{action compat in defor}
 \begin{align*}
& \mu^{H,l}_t\circ (\Phi_A(\hat{g})\otimes \Phi_A(\hat{g}))=\Phi_A(\hat{g})\circ \mu^{K,l}_t,\\
& \mu^{H,r}_t\circ (\Phi_A(\hat{g})\otimes \Phi_A(\hat{g}))=\Phi_A(\hat{g})\circ \mu^{K,r}_t.
  \end{align*}
 \end{enumerate}
 \end{defn}
 
Note that the condition (\ref{dendalgebra relation in defor}) of Definition \ref{deformation} gives rise to following identities
\begin{align}
 \mu^{H,l}_t \big(\mu^{H,l}_t (a, b ),  c\big) =~&  \mu^{H,l}_t \big (a,  (\mu^{H,l}_t (b, c) +  \mu^{H,r}_t(b, c) \big), \label{def-eqnn1}\\
 \mu^{H,l}_t \big(\mu^{H,r}_t (a, b),  c \big) =~&  \mu^{H,r}_t \big(a, \mu^{H,l}_t (b, c)\big), \label{def-eqnn2}\\
 \mu^{H,r}_t \big(\mu^{H,l}_t(a, b) + \mu^{H,r}_t (a, b), c \big)=~& \mu^{H,r}_t \big(a, \mu^{H,r}_t (b, c) \big), \label{def-eqnn3}
\end{align}
for all $a, b, c \in A^H$. This is equivalent to following system of equations: for all $n \geq 0$,
\begin{align}
& \label{dendalgebra relation ex1 in defor} \sum_{i+j=n} \mu^{H,l}_i \big(\mu^{H,l}_j (a, b ),  c\big) -  \mu^{H,l}_i \big (a,  (\mu^{H,l}_j (b, c) +  \mu^{H,r}_j(b, c) \big)= 0,\\
&\label{dendalgebra relation ex2 in defor}\sum_{i+j=n} \mu^{H,l}_i \big(\mu^{H,r}_j (a, b),  c \big) - \mu^{H,r}_ i \big(a, \mu^{H,l}_j (b, c)\big)=0,\\
&\label{dendalgebra relation ex3 in defor}\sum_{i+j=n} \mu^{H,r}_i \big(\mu^{H,l}_j(a, b) + \mu^{H,r}_j (a, b), c \big) - \mu^{H,r}_i \big(a, \mu^{H,r}_j (b, c) \big)=0,
\end{align}
for all $a, b, c \in A^H.$

\begin{defn}
For each $i \geq 0$ and $H\leq G$, define a map $\pi^H_i : \mathbb{K}[C_2] \otimes {(A^H)}^{\otimes 2} \rightarrow A^H$ by
$$\pi^H_i ([r]; a, b) = \begin{cases} \mu^{H,l}_i (a, b) ~~~ &\mathrm{ if } ~~ [r]=[1] \\
\mu^{H,r}_i (a, b) ~~~ &\mathrm{ if } ~~ [r] = [2]. \end{cases}$$
%For $n=1$, we have a notion of an infinitesimal of the deformation.
For $i=0$, we have $\pi^H_0 = \pi_{A^H}$.
The infinitesimal of the above equivariant one-parameter formal deformation is defined as
$$\pi_1=\bigoplus_{H\leq G} \pi^H_1.$$
\end{defn}

  \begin{remark}
 Using the condition (\ref{action compat in defor}) of Definition \ref{deformation}, it is easy to see that for each $i \geq 0$ and $H\leq G$, the map $\pi^H_i$ respect the group action, that is, $\pi^H_i ([r]; ga, gb)=g \pi^H_i ([r]; a, b)$. 
 \end{remark}
 
 \begin{prop}
 The infinitesimal of an equivariant deformation of a dendriform algebra is a $2$-cocycle.
 \end{prop}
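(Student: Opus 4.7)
The goal is to show $\delta\pi_1 = 0$ in the invariant subcomplex $\{S^\sharp_G(\Phi_A,\Phi_A),\delta\}$. Since the differential acts componentwise as $\delta^n = \bigoplus_H \delta^n_H$, this splits into two independent tasks: (i) verify that each $\pi^H_1 \in C^2_{\mathrm{dend}}(A^H, A^H)$ is a non-equivariant $2$-cocycle, where $A^H$ is regarded as a representation of itself so that $\theta_1 = \theta_2 = \pi_{A^H}$; and (ii) verify that the collection $\{\pi^H_1\}_{H \leq G}$ is invariant in the sense of (\ref{inv-defn}), so that $\pi_1$ genuinely lies in $S^2_G(\Phi_A,\Phi_A)$ to begin with.

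For (i), the plan is to read off the coefficient of $t^1$ (equivalently, set $n=1$) in each of the deformation equations (\ref{dendalgebra relation ex1 in defor})--(\ref{dendalgebra relation ex3 in defor}); this produces three bilinear identities relating $\mu^{H,l}_1, \mu^{H,r}_1$ and the original $\prec, \succ$ on $A^H$. Independently, I would expand $\delta^2_H(\pi^H_1) = \sum_{i=0}^{3} (-1)^i \delta_i \pi^H_1$ at each $[s] \in C_3$, unpacking the combinatorial maps $R_0(2;1,2)$, $R_0(2;2,1)$, $R_1(2;2,1)$, $R_2(2;1,2)$ at the three possible values of $[s]$. A direct case analysis then shows that the vanishing of $(\delta^2_H \pi^H_1)([s]; a, b, c)$ at $[s] = [1], [2], [3]$ is equivalent to the order-$t$ coefficient of (\ref{def-eqnn1}), (\ref{def-eqnn2}), (\ref{def-eqnn3}) respectively, all of which hold by hypothesis.

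For (ii), the compatibility relation (\ref{action compat in defor}) is an identity of formal power series in $t$. Equating coefficients of $t^1$ on both sides yields $\mu^{H,l}_1 \circ (\psi_g \otimes \psi_g) = \psi_g \circ \mu^{K,l}_1$ and the analogous identity for $\mu^{H,r}_1$, whenever $\hat g : G/H \to G/K$ is a morphism in $O_G$. Packaging these two (corresponding to $[r] = [1]$ and $[r] = [2]$) gives exactly $\pi^H_1 \circ (\mathrm{id}_{\mathbb{K}[C_2]} \otimes \psi_g^{\otimes 2}) = \psi_g \circ \pi^K_1$, which is the required invariance condition.

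The main obstacle is the bookkeeping in step (i): the abstract coboundary $\delta^2_H \pi^H_1$ is parametrized by $[s] \in C_3$ and is written via the maps $R_j$, whereas the deformation equations are stated concretely in terms of $\prec, \succ, \mu^{H,l}_1, \mu^{H,r}_1$. Once the three values of $[s]$ are matched carefully with the three dendriform identities by direct evaluation of $R_0, R_1, R_2$ at small arguments, the cocycle property is immediate from the degree-one deformation relations.
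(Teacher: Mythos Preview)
Your proposal is correct, and both the cocycle verification (i) and the invariance check (ii) go through as you describe. The route, however, differs from the paper's.

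The paper's proof is shorter and more conceptual: rather than expanding $\delta^2_H \pi^H_1$ at each $[s] \in C_3$ and matching the three cases by hand, it observes that the three deformation equations (\ref{dendalgebra relation ex1 in defor})--(\ref{dendalgebra relation ex3 in defor}) can be packaged into the single operadic identity $\sum_{i+j=n} \pi^H_i \circ \pi^H_j = 0$ using the circle product (\ref{dend-operad-circ}). Setting $n=1$ immediately gives $\pi_{A^H} \circ \pi^H_1 + \pi^H_1 \circ \pi_{A^H} = 0$, which is $\delta^2_H \pi^H_1 = 0$ since for self-coefficients the coboundary is exactly the one induced by the multiplication $\pi_{A^H}$ on the operad $\{\mathcal{O}(n)\}$. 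The invariance step you label (ii) is not carried out inside the paper's proof; it is handled by the remark immediately preceding the proposition, which notes that each $\pi^H_i$ respects the group action by condition (\ref{action compat in defor}).

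Your approach trades this operadic compression for an explicit case analysis: you unpack the combinatorial maps $R_j$ at the three values of $[s]$ and match them against the degree-one coefficients of (\ref{def-eqnn1})--(\ref{def-eqnn3}). This is more laborious but entirely self-contained, and it makes the correspondence between the three dendriform axioms and the three labels in $C_3$ transparent without relying on the reader having internalised the operad structure. Your treatment of invariance is also slightly more explicit than the paper's one-line remark. Either route is fine; the paper's is quicker once the operadic machinery of Section~\ref{sec-2} is taken for granted.
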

 \begin{proof}
 Using the circle product $\circ$ as defined in (\ref{dend-operad-circ}), we can rewrite the equations (\ref{dendalgebra relation ex1 in defor}), (\ref{dendalgebra relation ex2 in defor}), (\ref{dendalgebra relation ex3 in defor}) in a compact form as follows:
 $$\sum_{i+j=n} \pi^H_i \circ \pi^H_j= 0,~~~\forall~~~ n\geq 0,~~~\forall~~~H\leq G.$$
 For $n=1$, we get
 $$\pi_{A^H} \circ \pi^H_1 + \pi^H_1 \circ \pi_{A^H}=0.$$
 This implies, $\delta^2_H \pi^H_1=0$. Therefore, $\pi_1=\bigoplus_{H\leq G} \pi^H_1$ is an equivariant $2$-cocycle.
 \end{proof}
 
  \subsection{Rigidity of equivariant deformation}
  The aim of this subsection is to discuss the notion of equivalence of equivariant deformations of dendriform algebras.
 \begin{defn}
 \label{equ equivalent defn} Let $( \mu^l_t, \mu^r_t)$ and $(\mu\rq^l_t, \mu\rq^r_t)$ be two equivariant deformations of  the dendriform algebra $A$ equipped with an action of $G$, where $\mu^k_t=\sum_{i\geq 0} \mu^k_i t^i$ and $\mu\rq^k_t=\sum_{i\geq 0} \mu\rq^k_it^i$ for $k= l, r$.  We say that $( \mu^l_t, \mu^r_t)$ and $(\mu\rq^l_t, \mu\rq^r_t)$ are equivalent if there is a formal equivariant isomorphism $\Psi_t: A[[t]]\to A[[t]]$ of the following form:
$$\Psi_t(a)=\psi_0(a)+\psi_1(a)t+\psi_2(a)t^2+\cdots$$
such that 
\begin{enumerate}
\item $\psi_0= \mathrm{id}$ and for $i\geq 1$, $\psi_i: A\to A$ is an equivariant $\mathbb{K}$-linear map;
\item $\Psi_t\circ \mu_t\rq^l=\mu^l_t\circ (\Psi_t\otimes \Psi_t)\,\,\,\text{and}\,\,\, \Psi_t\circ \mu_t\rq^r=\mu^r_t\circ (\Psi_t\otimes \Psi_t).$
\end{enumerate}
\end{defn}
\begin{remark}
Suppose  $( \mu^l_t, \mu^r_t)$ and $(\mu\rq^l_t, \mu\rq^r_t)$ are equivalent deformations via equivariant isomorphism $\Psi_t$. Then for every subgroup $H\leq G$, the equivariant isomorphism $\Psi_t$ induces a formal isomorphism $A^H[[t]]\to {A}^H[[t]]$ for all subgroups $H$ of $G$.
\end{remark}
From the second condition of the Definition \ref{equ equivalent defn} we have the following equations
 \begin{align}
 &\sum_{i,j\geq 0}\psi_i(\mu\rq^l_j(a,b))t^{i+j}=\sum_{i,j,k\geq 0}\mu^l_i(\psi_j(a),\psi_k(b))t^{i+j+k},\\
 &\sum_{i,j\geq 0}\psi_i(\mu\rq^r_j(a,b))t^{i+j}=\sum_{i,j,k\geq 0}\mu^r_i(\psi_j(a),\psi_k(b))t^{i+j+k}.
 \end{align}
 
 Then we have the following.
 \begin{prop}
The infinitesimals corresponding to equivalent equivariant deformations are cohomologous. Thus, they give rise to the same cohomology class.
  \end{prop}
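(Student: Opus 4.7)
The plan is to extract a relation $\pi'^H_1 - \pi^H_1 = \delta^1_H(\psi_1|_{A^H})$ at each subgroup $H \le G$ by comparing the coefficients of $t$ in the two intertwining conditions of Definition \ref{equ equivalent defn}, and then to show that the collection $\{\psi_1|_{A^H}\}_{H \le G}$ is an invariant $1$-cochain in $S^1_G(\Phi_A;\Phi_A)$. This will exhibit $\pi_1-\pi'_1$ as the coboundary of an invariant cochain, so the two infinitesimals represent the same class in $H^2$.

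First I would expand $\Psi_t \circ \mu'^l_t = \mu^l_t \circ (\Psi_t \otimes \Psi_t)$ and the analogous equation for $\succ$. Using $\psi_0 = \mathrm{id}$ and the fact that $\mu^l_0 = \mu'^l_0 = \prec$, $\mu^r_0 = \mu'^r_0 = \succ$ (after restriction to $A^H$), the coefficient of $t$ gives
\begin{align*}
\mu'^{H,l}_1(a,b) - \mu^{H,l}_1(a,b) &= a \prec \psi_1(b) + \psi_1(a) \prec b - \psi_1(a \prec b),\\
\mu'^{H,r}_1(a,b) - \mu^{H,r}_1(a,b) &= a \succ \psi_1(b) + \psi_1(a) \succ b - \psi_1(a \succ b),
\end{align*}
for all $a,b\in A^H$. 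Packaging these into $\pi^H_1([r];a,b) - \pi'^H_1([r];a,b)$ via the rule $\pi^H_i([1];-,-)=\mu^{H,l}_i$ and $\pi^H_i([2];-,-)=\mu^{H,r}_i$, I would next identify the right-hand side with $\delta^1_H(\psi_1|_{A^H})$. Here $\psi_1|_{A^H}$ is viewed as a $1$-cochain in $C^1_{\text{dend}}(A^H, A^H)$ (i.e.\ a map $\mathbb{K}[C_1]\otimes A^H\to A^H$), and a short unwinding of the combinatorial maps $R_0, R_j$ in the formula (\ref{non-eq-cob}) for $n=1$ shows that, at $[r]\in C_2$,
\[
(\delta^1 \psi_1)([r];a,b) = \theta_1([r];a,\psi_1(b)) - \psi_1(\pi_A([r];a,b)) + \theta_2([r];\psi_1(a),b),
\]
which agrees with the expression above.

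Next I would verify the equivariance requirements so that the cochain is admissible. Since each $\psi_i$ is equivariant, it preserves every fixed-point subspace: $\psi_i(A^H)\subseteq A^H$ for all $H\le G$. Moreover, for a subconjugacy $g^{-1}Hg \subseteq K$, naturality of $\Psi_t$ together with the identification $\Phi_A(\hat g) = \psi_g : A^K \to A^H$ gives $\psi_1|_{A^H}\circ \psi_g = \psi_g \circ \psi_1|_{A^K}$, which is exactly the invariance condition (\ref{inv-defn}) with $n=1$. Therefore $\{\psi_1|_{A^H}\}_{H\le G}$ lies in $S^1_G(\Phi_A;\Phi_A)$, and $\pi_1 - \pi'_1 = \bigoplus_{H\le G} \delta^1_H(\psi_1|_{A^H}) = \delta(\{\psi_1|_{A^H}\})$ in $S^2_G(\Phi_A;\Phi_A)$.

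The only non-routine part is the bookkeeping of the combinatorial maps $R_0, R_1, R_2$ in the definition of $\delta^1$ to confirm the case distinction on $[r]\in C_2$ reproduces the two identities above; the algebraic content is precisely the standard $t^1$-expansion argument used classically. Once the coboundary identification is in place, conclude that $[\pi_1] = [\pi'_1]$ in $H^2(S^\sharp_G(\Phi_A;\Phi_A))$, proving that equivalent equivariant deformations have cohomologous infinitesimals.
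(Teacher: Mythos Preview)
Your proposal is correct and follows essentially the same approach as the paper: both expand the intertwining relation $\Psi_t\circ\mu'_t=\mu_t\circ(\Psi_t\otimes\Psi_t)$ at order $t^1$ and identify the difference of infinitesimals with the coboundary of $\psi_1$, noting that $\psi_1$ is equivariant. Your version is in fact more careful than the paper's, since you explicitly verify that $\{\psi_1|_{A^H}\}$ satisfies the invariance condition (\ref{inv-defn}) and hence lies in $S^1_G(\Phi_A;\Phi_A)$, and you unwind the combinatorial maps to confirm the form of $\delta^1$; the paper records this only as ``$\psi_1$ is an equivariant map'' and uses the operadic circle-product shorthand $\pi_A\circ\psi_1-\psi_1\circ\pi_A$. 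One harmless slip: your displayed equations give $\pi'^H_1-\pi^H_1$ on the left, so the later line should read $\pi'_1-\pi_1=\delta(\{\psi_1|_{A^H}\})$ rather than $\pi_1-\pi'_1$; this does not affect the conclusion that the two classes coincide.
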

  
  \begin{proof}
  The conditions of the above definition are equivalent to
  $$\sum_{i+j=n} \psi_i ( \pi_j ([r]; a, b)) = \sum_{i+j+k=n} \pi\rq_k ([r]; \psi_i(a), \psi_j(b)).$$
  For $n=1$, we have,
  \begin{align*}
  \pi_1([r]; a, b) + \psi_1 ( \pi_A ([r]; a, b))
  = \pi_A ([r]; \psi(a), b) + \pi_A ([r]; a, \psi(b)) + \pi\rq_1 ([r]; a, b).
  \end{align*}
  In other words, 
  \begin{align*}
  (\pi- \pi\rq)([r]; a, b) &= \pi_A ([r]; \psi(a), b) + \pi_A ([r]; a, \psi(b)) -  \psi_1 \circ \pi_A ([r]; a, b)\\
                                  &= (\pi_A \circ \psi_1 - \psi_1 \circ \pi_A)([r]; a, b) = \delta^2 (\psi_1).
   \end{align*}
   Note that $\phi_1$ is an equivariant map and the above equality shows that the infinitesimals of two equivalent equivariant deformations are cohomologous.
  \end{proof}
  \begin{defn}
  A dendriform algebra $A$ equipped with an action of $G$ is called rigid if every equivariant deformation of $A$ is equivalent to the trivial deformation.
  \end{defn}
  Similar to the non-equivariant case \cite[Theorem 3.5]{Das}, we have the following rigidity theorem for equivariant deformations.
 \begin{thm}
 Let $A$ be a dendriform algebra equipped with an action of finite group $G$. If $H^2_{G}(A, A)=0$ then $A$ is equivariantly rigid.
 \end{thm}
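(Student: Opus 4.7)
The plan is to follow the classical Gerstenhaber strategy for rigidity: show that any equivariant deformation can be "straightened" order by order using equivariant $1$-cochains whose existence is guaranteed by the vanishing of $H^2_G(A,A)$.

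First I would fix an equivariant deformation $(\mu^l_t, \mu^r_t)$ of $A$ and let $n \geq 1$ be the smallest integer for which $(\mu^{H,l}_n, \mu^{H,r}_n)$ is nonzero for some $H \leq G$. Packaging these as $\pi^H_n : \mathbb{K}[C_2] \otimes (A^H)^{\otimes 2} \to A^H$ and setting $\pi_n = \bigoplus_{H \leq G} \pi^H_n$, the same argument used to prove that the infinitesimal is a $2$-cocycle (extracting the coefficient of $t^n$ in the deformation equations, using that all lower order terms except $\pi_0 = \pi_A$ vanish) shows that $\delta^2 \pi_n = 0$. Here I use the identification $H^*_G(A,A) \cong H_*(S^\sharp(\Phi_A,\Phi_A))$ established in the previous section, so that $\pi_n$ is an equivariant $2$-cocycle in the sense of the invariant subcomplex.

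Since $H^2_G(A,A) = 0$, there exists an equivariant $1$-cochain $\psi_n \in C^1_G(A,A)$, i.e. an equivariant $\mathbb{K}$-linear map $\psi_n : A \to A$ (restricting to maps $A^H \to A^H$ for every $H$) such that $\pi_n = \delta^1 \psi_n$. I then define a formal equivariant isomorphism
\begin{equation*}
\Psi_t = \mathrm{id}_A + \psi_n \, t^n : A[[t]] \to A[[t]],
\end{equation*}
which is invertible as a formal power series since its leading term is the identity; equivariance of $\psi_n$ makes $\Psi_t$ commute with the $G$-action and hence induces isomorphisms on every fixed subalgebra $A^H[[t]]$. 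Transporting the deformation along $\Psi_t$ produces an equivalent deformation $(\tilde{\mu}^l_t, \tilde{\mu}^r_t)$ whose order$\leq n-1$ terms still agree with the trivial deformation and whose order$\,n$ term is $\pi_n - \delta^1 \psi_n = 0$. Thus the smallest order of nontriviality has strictly increased.

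Iterating this procedure gives a sequence $\Psi_t^{(n)}, \Psi_t^{(n+1)}, \ldots$ of formal equivariant automorphisms whose composition $\Psi_t^{(\infty)} = \cdots \circ \Psi_t^{(n+1)} \circ \Psi_t^{(n)}$ is well defined in $A[[t]]$ because, at each fixed order in $t$, only finitely many factors contribute a nontrivial correction. The limit trivializes $(\mu^l_t, \mu^r_t)$, proving that every equivariant deformation is equivalent to the trivial one. The main obstacle in the argument is verifying at each stage that the chosen $\psi_n$ may be taken to be equivariant and compatible with restriction to the fixed-point subalgebras $A^H$; this is precisely what the equivariant cohomology $H^2_G(A,A)$ (as opposed to the ordinary $H^2_{\mathrm{dend}}$) records, so the hypothesis $H^2_G(A,A) = 0$ does exactly the work required.
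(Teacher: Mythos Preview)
Your proposal is correct and follows exactly the classical Gerstenhaber rigidity argument, which is precisely the approach the paper has in mind: the paper does not spell out a proof at all but simply states that the result is ``similar to the non-equivariant case \cite[Theorem 3.5]{Das}''. Your step-by-step construction of the equivariant trivializing isomorphism by successively killing the lowest nonvanishing term via $H^2_G(A,A)=0$ is the standard method, and your care in noting that $\psi_n$ must be chosen equivariantly (so that $\Psi_t$ restricts to each $A^H[[t]]$) is the one genuinely equivariant point that needs attention.
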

 
 \subsection{Extensions of finite order deformations}
In this final subsection, we discuss the problem of extending an equivariant dendriform algebra deformation of $A$ of order $n$ to that of order $n+1$. 
\begin{defn}\label{n-deform}
An equivariant $n$-deformation of a dendriform algebra is given by sums $\mu^l_t = \bigoplus_{H \leq G} \mu^{H, l}_t$ and $\mu^r_t = \bigoplus_{H \leq G} \mu^{H, r}_t$, where
$$\mu^{H, l}_t=\sum^n_{i=0} \mu^{H, l}_i t^i \quad \mathrm{and} \quad \mu^{H, r}_t=\sum^n_{i=0} \mu^{H, r}_i t^i$$
such that (\ref{def-eqnn1})-(\ref{def-eqnn3}) holds modulo $t^{n+1}.$
\end{defn}

We say an equivariant $n$-deformation $(\mu^l_t, \mu^r_t)$ of a dendriform algebra is extendible to an equivariant $(n+1)$-deformation if for all subgroups $H$ of $G$ there is an element $\mu^{H, l}_{n+1}, \mu^{H, r}_{n+1}\in C^{2}_{\mathrm{dend}}(A^H, A^H)$ such that
\begin{align*}
 \bar{\mu_t}^{H, l}= \mu^{H, l}_t + \mu^{H, l}_{n+1}t^{n+1}  \quad \mathrm{and} \quad
 \bar{\mu_t}^{H, r}= \mu^{H, r}_t + \mu^{H, r}_{n+1}t^{n+1},
\end{align*}
defines a $(n+1)$-deformation of $A^H$.

 Since $(\mu^{H, l}_t, \mu^{H, r}_t)$ is a deformation of order $n$, we have
 $$\pi_{A^H} \circ \pi^H_i + \pi^H_1 \circ \pi^H_{i-1} + \cdots + \pi^H_{i-1} \circ \pi^H_1 + \pi^H_i \circ \pi_{A^H}= 0,~~~\text{for}~~~ i=1,\ldots, n.$$
In other words, 
 $$\delta^H_{\mathrm{dend}} (\pi^H_i) = - \sum _{\substack{p+q=i\\p, q \geq 1}} \pi^H_p \circ \pi^H_q .$$
We define
\begin{align*}
\pi^H_{n+1} ([r]; a, b)= \begin{cases}
                                   & \mu^{H, l}_{n+1} ~~~~ \text{if}~~~ [r]=[1],\\
                                   & \mu^{H, r}_{n+1} ~~~~ \text{if}~~~ [r]=[2].
                                  \end{cases}
\end{align*} 
 
 For $(\bar{\mu_t}^{H, l}, \bar{\mu_t}^{H, r})$ to be a deformation of order $n+1$, one more deformation equation need to be satisfied, namely,
$$\delta^H_{\mathrm{dend}} (\pi^H_{n+1}) = - \sum _{\substack{p+q=n+1\\ p, q \geq 1}} \pi^H_p \circ \pi^H_q .$$
We define
$$\mathrm{Ob}^H = - \sum _{\substack{p+q=n+1\\ p, q \geq 1}} \pi^H_p \circ \pi^H_q.$$
The equivariant obstruction to extend the given deformation is defined as $$\mathrm{Ob}^G = \bigoplus_{H \leq G} \mathrm{Ob}^H.$$
 \begin{thm}
The equivariant obstruction is a $3$-cocycle in the equivariant cohomology of $A$.
 \end{thm}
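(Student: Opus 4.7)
The plan is to carry out the classical Gerstenhaber argument---that the obstruction to extending a deformation is always a cocycle---equivariantly, subgroup by subgroup, working inside the operad with multiplication $\{C^n_G(V,V), \circ_i, \pi_A\}$ developed earlier in the paper. Throughout, I use the circle product $\circ$ from (\ref{dend-operad-circ}) together with the induced graded bracket $[f,g] = f \circ g - (-1)^{(|f|-1)(|g|-1)} g \circ f$, with respect to which the coboundary is $\delta = [\pi_A, -]$ (up to a sign) and the multiplicativity condition $\pi_A \circ \pi_A = 0$ is the Maurer--Cartan equation $[\pi_A, \pi_A] = 0$.

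\textbf{Step 1 (Membership in $S^3_G$).} Each $\pi^H_i$ is $G$-equivariant by the remark that precedes this theorem, and the partial composition $\circ_i$ of two equivariant cochains is again equivariant. Hence each $\mathrm{Ob}^H$ is an equivariant $3$-cochain on $A^H$, and the family $\{\mathrm{Ob}^H\}_{H \le G}$ automatically satisfies the invariance relation with respect to every $\psi_g$ coming from a subconjugacy $g^{-1}Hg \subseteq K$. Therefore $\mathrm{Ob}^G \in S^3_G(\Phi_A, \Phi_A)$. Since $\delta^n = \bigoplus_H \delta^n_H$ is defined componentwise, verifying the cocycle condition for $\mathrm{Ob}^G$ reduces to showing $\delta^H_{\mathrm{dend}}(\mathrm{Ob}^H) = 0$ for each fixed $H \leq G$.

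\textbf{Step 2 (Maurer--Cartan packaging).} Set $\pi^H_t := \sum_{i=0}^n \pi^H_i\, t^i$ and extend $\circ$ and $[\cdot,\cdot]$ to be $\mathbb{K}[[t]]$-linear. The order-$n$ deformation equations (\ref{dendalgebra relation ex1 in defor})--(\ref{dendalgebra relation ex3 in defor}), repackaged through the definition of $\pi^H_i$, are equivalent to
\begin{equation*}
\pi^H_t \circ \pi^H_t \equiv 0 \pmod{t^{n+1}}, \qquad\text{i.e.,}\qquad [\pi^H_t, \pi^H_t] \equiv 0 \pmod{t^{n+1}}.
\end{equation*}
Because $\pi^H_t$ is truncated at $t^n$, the coefficient of $t^{n+1}$ in $[\pi^H_t, \pi^H_t]$ contains no occurrence of $\pi^H_{n+1}$ and reduces precisely to
\begin{equation*}
[\pi^H_t, \pi^H_t] \equiv -2\, \mathrm{Ob}^H\, t^{n+1} \pmod{t^{n+2}}.
\end{equation*}

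\textbf{Step 3 (Jacobi).} The graded Jacobi identity for the Gerstenhaber bracket gives the formal identity $[\pi^H_t, [\pi^H_t, \pi^H_t]] = 0$ in $C^4_{\mathrm{dend}}(A^H, A^H)[[t]]$. Extract the coefficient of $t^{n+1}$: the inner bracket vanishes modulo $t^{n+1}$, so every cross-term pairing some $\pi^H_i$ ($i \geq 1$) with lower-order coefficients of $[\pi^H_t, \pi^H_t]$ drops out, and only the term pairing the constant piece $\pi^H_0 = \pi_A$ with the $t^{n+1}$-coefficient $-2\,\mathrm{Ob}^H$ survives. Hence $-2\,[\pi_A, \mathrm{Ob}^H] = 0$, which (up to a sign) is exactly $\delta^H_{\mathrm{dend}}(\mathrm{Ob}^H) = 0$.

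The main technical hurdle is the bookkeeping: aligning the explicit formula for $\delta$ given in Section~4 with the bracket formulation $[\pi_A, -]$ with correct signs, and verifying that the three-equation system (\ref{dendalgebra relation ex1 in defor})--(\ref{dendalgebra relation ex3 in defor}) is genuinely equivalent to the compact Maurer--Cartan form $[\pi^H_t, \pi^H_t] \equiv 0 \pmod{t^{n+1}}$. Once this identification is in place, the Jacobi argument in Step~3 yields the conclusion in two lines.
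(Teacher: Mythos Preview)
Your proof is correct and more explicit than the paper's. The paper handles Step~1 exactly as you do (invariance of $\mathrm{Ob}^G$ follows from the naturality condition on the $\pi^H_i$), but for the cocycle condition it does not carry out any computation: it simply observes that each $\mathrm{Ob}^H$ is the ordinary (non-equivariant) obstruction cocycle for the deformation of $A^H$, that the equivariant coboundary $\delta = \bigoplus_H \delta^n_H$ is the restriction of the non-equivariant one, and then invokes the classical fact (from \cite{Das}, ultimately Gerstenhaber) that obstruction cochains are closed. Your route instead reproves that classical fact on the spot, packaging the order-$n$ deformation equations as $[\pi^H_t,\pi^H_t]\equiv 0 \pmod{t^{n+1}}$ and extracting the $t^{n+1}$-coefficient of the graded Jacobi identity $[\pi^H_t,[\pi^H_t,\pi^H_t]]=0$. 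This buys you a self-contained argument that does not lean on the non-equivariant reference, at the cost of the implicit assumption that $\mathrm{char}\,\mathbb{K}\neq 2,3$ (needed to cancel the factors $2$ and $3$ coming from the symmetrized bracket and Jacobi). One small slip: in Step~3 you write $\pi^H_0=\pi_A$, but it should be $\pi_{A^H}$, the restriction of the dendriform multiplication to the $H$-fixed subalgebra.
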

 \begin{proof}
 First we show that $\mathrm{Ob}^G \in S^3 (\Phi_A, \Phi_A)$ is invariant under the action of $G$.
 As $\pi_i$ respect the group action, it is easy to see that $\mathrm{Ob}^H$ is an invariant $3$-cocycle.
 Thus, $\mathrm{Ob}^G \in S_G^3 (\Phi_A, \Phi_A)$. 
 
 Note that $\mathrm{Ob}^G \in S_G^3 (\Phi_A, \Phi_A) \leq S^3 (\Phi_A, \Phi_A)$ is also a obstruction cocycle for the non-equivariant extension of the given deformation and $\delta :  S_G^3 (\Phi_A, \Phi_A) \rightarrow  S_G^4 (\Phi_A, \Phi_A)$ is the restriction of the non-equivariant coboundary map $\delta :  S^3 (\Phi_A, \Phi_A) \rightarrow  S^4 (\Phi_A, \Phi_A)$ to the submodule  $S_G^3 (\Phi_A, \Phi_A)$.  Therefore, $\mathrm{Ob}^G$ is a $3$-cocycle in the equivariant cohomology of the dendriform algebra.
 \end{proof}
 
 As a consequence, we have the following.
 \begin{thm}
 If $H^3_G(A;A)=0$ then every finite order equivariant deformation of $A$ extends to a deformation of next order.
 \end{thm}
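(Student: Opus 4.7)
The plan is to invoke the previous theorem, which identifies the obstruction $\mathrm{Ob}^G = \bigoplus_{H \leq G} \mathrm{Ob}^H$ to extending an equivariant $n$-deformation to an equivariant $(n{+}1)$-deformation as a $3$-cocycle in $S^3_G(\Phi_A, \Phi_A)$, and then use the vanishing hypothesis $H^3_G(A;A) = 0$ to realize this obstruction as a coboundary. The rest is a matter of reading off the extension and verifying that all the structural requirements of Definition \ref{n-deform} are met.

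More concretely, first I would fix an equivariant $n$-deformation $(\mu^l_t, \mu^r_t)$ of $A$ with components $\pi^H_0, \pi^H_1, \ldots, \pi^H_n$ for each $H \leq G$, and recall that by the previous theorem $\mathrm{Ob}^G$ is a $3$-cocycle in $S^\sharp_G(\Phi_A, \Phi_A)$, hence represents a class in $H^3_G(A;A)$ under the isomorphism established earlier. By hypothesis this class is zero, so there exists an invariant $2$-cochain $\pi_{n+1} = \bigoplus_{H \leq G} \pi^H_{n+1} \in S^2_G(\Phi_A, \Phi_A)$ with $\delta \pi_{n+1} = \mathrm{Ob}^G$; splitting $\pi^H_{n+1}$ according to $[r] \in C_2$ defines the candidate extending terms $\mu^{H,l}_{n+1}$ and $\mu^{H,r}_{n+1}$.

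Next I would set
\[
\bar{\mu}^{H,l}_t = \mu^{H,l}_t + \mu^{H,l}_{n+1} t^{n+1}, \qquad \bar{\mu}^{H,r}_t = \mu^{H,r}_t + \mu^{H,r}_{n+1} t^{n+1}
\]
and check that this defines an equivariant $(n{+}1)$-deformation. The deformation equations modulo $t^{n+2}$ reduce, in the compact form $\sum_{i+j=n+1} \pi^H_i \circ \pi^H_j = 0$, to the single identity
\[
\delta^H_{\mathrm{dend}} (\pi^H_{n+1}) = -\!\!\sum_{\substack{p+q=n+1\\ p, q \geq 1}} \pi^H_p \circ \pi^H_q = \mathrm{Ob}^H,
\]
which holds by the choice of $\pi^H_{n+1}$; the lower-order equations $\sum_{i+j=m} \pi^H_i \circ \pi^H_j = 0$ for $m \leq n$ are unaffected and already satisfied by assumption on the $n$-deformation.

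The remaining point is compatibility with the $G$-action, i.e.\ condition \eqref{action compat in defor} of Definition \ref{deformation}. Since $\pi_{n+1}$ lies in $S^2_G(\Phi_A, \Phi_A)$, it satisfies the invariance relation $\pi^H_{n+1} \circ (\mathrm{id} \otimes \psi_g^{\otimes 2}) = \psi_g \circ \pi^K_{n+1}$ for every subconjugacy morphism $\hat{g}: G/H \to G/K$, and unpacking this in terms of $[r] = [1], [2]$ gives precisely the equivariance for $\mu^{H,l}_{n+1}$ and $\mu^{H,r}_{n+1}$. Combined with the equivariance of the lower-order terms, this yields \eqref{action compat in defor} for the extended $\bar{\mu}_t$. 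The main subtlety to watch is to ensure that the invariant primitive $\pi_{n+1}$ of $\mathrm{Ob}^G$ can indeed be found inside $S^2_G(\Phi_A, \Phi_A)$ rather than merely in $S^2(\Phi_A, \Phi_A)$; this is exactly what the hypothesis $H^3_G(A;A) = 0$ provides, via the equivariant subcomplex whose cohomology was identified with $H^*_G(A,A)$ in the previous section.
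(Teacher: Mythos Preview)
Your proposal is correct and follows exactly the approach the paper intends: the paper states this theorem immediately after proving that $\mathrm{Ob}^G$ is an equivariant $3$-cocycle, with only the words ``As a consequence, we have the following,'' so the argument you spell out---use $H^3_G(A;A)=0$ to write $\mathrm{Ob}^G = \delta \pi_{n+1}$ for some $\pi_{n+1}\in S^2_G(\Phi_A,\Phi_A)$, then read off the extending terms and check the deformation and invariance conditions---is precisely the intended (and standard) one. Your care in noting that the primitive must be found in the invariant subcomplex $S^2_G$ rather than merely in $S^2$ is the only substantive point, and it is exactly what the hypothesis guarantees.
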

 
\begin{corollary}
If $H^3_G(A;A)=0$ then every $2$-cocycle is the infinitesimal of some equivariant formal deformation of $A$. 
 \end{corollary}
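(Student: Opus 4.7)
The plan is to build the deformation degree-by-degree, using the given $2$-cocycle as the first-order term and invoking the preceding extension theorem at every higher order. Concretely, let $\pi_1 = \bigoplus_{H\leq G} \pi_1^H \in S^2_G(\Phi_A,\Phi_A)$ be an equivariant $2$-cocycle, so that $\delta^2_H \pi_1^H = 0$ for every subgroup $H\leq G$ and $\pi_1^H$ is invariant under the action of $G$ in the sense of the previous section. Decompose $\pi_1^H$ into its two components $\mu_1^{H,l},\mu_1^{H,r}: A^H\times A^H\to A^H$ according to the two values $[r]=[1],[2]$.

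First, I would verify the base case: the pair $(\mu_1^{H,l},\mu_1^{H,r})$, together with the original operations $\mu_0^{H,l}=\prec$ and $\mu_0^{H,r}=\succ$ on $A^H$, defines an equivariant $1$-deformation in the sense of Definition \ref{n-deform}. Indeed, the only condition to check at order $n=1$ of equations (\ref{dendalgebra relation ex1 in defor})--(\ref{dendalgebra relation ex3 in defor}) is, using the compact form,
\begin{equation*}
\pi_{A^H}\circ \pi_1^H + \pi_1^H \circ \pi_{A^H} = 0,
\end{equation*}
which is exactly the cocycle condition $\delta^2_H \pi_1^H = 0$. Equivariance at each level is automatic since $\pi_1$ is invariant.

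Next, I would run an induction on $n$. Assume we have built an equivariant $n$-deformation $\mu^l_t=\bigoplus_H\mu_t^{H,l},\; \mu^r_t=\bigoplus_H\mu_t^{H,r}$ extending the original dendriform structure and whose order-$1$ piece is $\pi_1$. The preceding theorem identifies the obstruction cochain
\begin{equation*}
\mathrm{Ob}^G \;=\; \bigoplus_{H\leq G}\, \Bigl( -\sum_{\substack{p+q=n+1\\ p,q\geq 1}} \pi_p^H\circ \pi_q^H \Bigr)
\end{equation*}
as a $3$-cocycle in $S^3_G(\Phi_A,\Phi_A)$. By the hypothesis $H^3_G(A;A)=0$ (and the isomorphism with $H_3(S^\sharp_G(\Phi_A,\Phi_A))$ from the previous section), this cocycle is a coboundary; pick $\pi_{n+1}=\bigoplus_H\pi_{n+1}^H$ with $\delta\pi_{n+1}=\mathrm{Ob}^G$ whose components $\mu_{n+1}^{H,l},\mu_{n+1}^{H,r}$ then yield the extension to an $(n+1)$-deformation, exactly as in the proof of the previous theorem.

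Iterating this construction produces a sequence $\{\pi_i\}_{i\geq 0}$ assembling into a formal equivariant deformation $(\mu^l_t,\mu^r_t)$ of $A$ whose infinitesimal is the prescribed $\pi_1$. The only subtle step, and the one I would double-check most carefully, is that the cochain $\pi_{n+1}$ chosen to kill the obstruction can actually be taken in the invariant subcomplex $S^2_G(\Phi_A,\Phi_A)$ rather than merely in $S^2(\Phi_A,\Phi_A)$; this is ensured precisely because the extension theorem was proved at the equivariant level, so $\delta:S^2_G\to S^3_G$ already surjects onto the equivariant cocycles in cohomological degree $3$ modulo the vanishing group $H^3_G(A;A)$.
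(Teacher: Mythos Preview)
Your proof is correct and follows exactly the argument the paper has in mind: the corollary is stated there without proof, as an immediate consequence of the preceding extension theorem, and your sketch spells out precisely that inductive step-by-step construction starting from the given $2$-cocycle as a $1$-deformation.
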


 \vspace*{.2cm} 
 
\noindent {\bf Acknowledgements.} The research of A. Das is supported by the fellowship of Indian Institute of Technology (IIT) Kanpur.

\mbox{ }\\

\providecommand{\bysame}{\leavevmode\hbox to3em{\hrulefill}\thinspace}
\providecommand{\MR}{\relax\ifhmode\unskip\space\fi MR }
% \MRhref is called by the amsart/book/proc definition of \MR.
\providecommand{\MRhref}[2]{%
  \href{http://www.ams.org/mathscinet-getitem?mr=#1}{#2}
}
\providecommand{\href}[2]{#2}

\mbox{ } \\

\begin{thebibliography}{BFGM03}

\bibitem{aguiar}
M. Aguiar,  Pre-Poisson algebras, {\em Lett. Math. Phys.} 54 (2000), no. 4, 263-277. 

\bibitem{bal-def}
D. Balavoine, D\'{e}formations des alg\'{e}bres de Leibniz [Deformations of Leibniz algebras], 
{\em C. R. Acad. Sci. Paris S\'{e}r. I Math.} 319 (1994), no. 8, 783-788.

\bibitem{bal}
D. Balavoine, Homology and cohomology with coefficients, of an algebra over a quadratic operad,
{\em J. Pure Appl. Algebra} 132 (1998), no. 3, 221-258.

\bibitem{bere}
T. Brzezi\'{n}ski, Rota-Baxter systems, dendriform algebras and covariant bialgebras, {\em J. Algebra} 460 (2016), 1-25

\bibitem{bredon67}
G. E. Bredon, Equivariant cohomology theories, \emph{Lecture Notes in Math.} 34 (Springer-Verlag) (1967).

\bibitem{Das}
A. Das, Cohomology and deformations of dendriform algebras, and $\mathrm{Dend}_\infty$-algebras, \emph{arXiv:1903.11802}.

\bibitem{farb-guo} K. Ebrahimi-Farb and L. Guo, Rota-Baxter algebras and dendriform algebras, {\em  J. Pure Appl. Algebra} 212 (2008), no. 2, 320-339.

\bibitem{elm} 
A. D. Elmendorf, Systems of fixed point sets, \emph{Trans. Amer. Math. Soc.} 277 (1) (1983) 275-284.

\bibitem{foissy}
L. Foissy, Les alg\`{e}bres de Hopf des arbres enracin\'{e}s d\'{e}cor\'{e}s II,
{\em Bull. Sci. Math.}, 126 (2002), pp. 249-288.

\bibitem{frab}
A. Frabetti, Leibniz homology of dialgebras of matrices, {\em J. Pure Appl. Algebra}, 129 (1998), pp. 123-141.

\bibitem{G1}
M. Gerstenhaber, The cohomology structure of an associative ring,
{\em Ann. of Math.} (2) 78 (1963), 267-288.

\bibitem{G2}
M. Gerstenhaber, On the deformation of rings and algebras,
{\em Ann. of Math.} (2) 79 (1964), 59-103.

\bibitem{gers-sch1}
M. Gerstenhaber and S. D. Schack, On the deformation of algebra morphisms and diagrams,
{\em Trans. Amer. Math. Soc.} 279 (1983), no. 1, 1-50.

\bibitem{gers-sch2}
M. Gerstenhaber and S. D. Schack, Bialgebra cohomology, deformations, and quantum groups,
{\em Proc. Nat. Acad. Sci. U.S.A.} 87 (1990), no. 1, 478-481.

%\bibitem{G3}
%M.~Gerstenhaber.
%\newblock On the deformation of rings and algebras. {II}.
%\newblock \emph{Ann. of Math.}, \textbf{84}:1-19, 1966.

%\bibitem{G4}
%M.~Gerstenhaber.
%\newblock On the deformation of rings and algebras. {III}.
%\newblock \emph{Ann. of Math. (2)}, \textbf{88}:1-34, 1968.

%\bibitem{G5}
%M.~Gerstenhaber.
%\newblock On the deformation of rings and algebras. {IV}.
%\newblock \emph{Ann. of Math. (2)}, \textbf{99}:257-276, 1974.

\bibitem{guo}
L. Guo, An introduction to Rota-Baxter algebra,
Surveys of Modern Mathematics, 4. {\em International Press, Somerville, MA; Higher Education Press, Beijing,} 2012. 

\bibitem{gers-voro}
M. Gesrtenhaber and A. A. Voronov, Homotopy $G$-algebras and moduli space operad,
{\em Internat. Math. Res. Notices} 1995, no. 3, 141-153.

\bibitem{loday} J.-L. Loday, Dialgebras, \emph{Dialgebras and related operads,} 7-66,  Lecture Notes in Math., 1763, Springer, Berlin, 2001.

\bibitem{loday50}
J.-L. Loday, Arithmetree, {\em J. Algebra,} 258 (2002), pp. 275-309.

\bibitem{lod-val-book}
J.-L. Loday and B. Vallette, Algebraic operads,
Grundlehren der Mathematischen Wissenschaften, 346. {\em Springer, Heidelberg,} 2012.

\bibitem{maj-mukh}
A. Majumdar and G. Mukherjee, Deformation theory of dialgebras,
{\em K-Theory} 27 (2002), no. 1, 33-60.

\bibitem{MY19}
G. Mukherjee and R.B. Yadav, Equivariant one-parameter deformations of associative algebras, \emph{Journal of Algebra and its Applications}, (2019), \emph{arXiv:1804.05355}.

\bibitem{ronco}
M. Ronco, A Milnor-Moore theorem for dendriform Hopf algebras,
{\em C. R. Acad. Sci. Paris S\'{e}r. I Math.} 332 (2001), no. 2, 109-114.

\bibitem{nij-ric}
A. Nijenhuis and R. W. Richardson, Deformations of Lie algebra structures,
{\em J. Math. Mech.} 17 1967 89-105.

\bibitem{uchino} K. Uchino, Quantum analogy of Poisson geometry, related dendriform algebras and Rota-Baxter operators,
{\em Lett. Math. Phys.} 85 (2008), no. 2-3, 91-109.

\end{thebibliography}
\end{document}